\newtheorem{observation}{Observation}
\newsavebox{\tempbox}
\newcommand{\cbox}[2]{%
  \fcolorbox{black}{#1}{\texttt{#2\strut}}\kern-\fboxrule}
\begin{document}

\title{A Language-Theoretic Approach to the Heapability of Signed Permutations}           
{}                 

\author{Gabriel Istrate}
\institute{Faculty of Mathematics and Computer Science, University of Bucharest\\
Str. Academiei 14, 
011014, Sector 1, 
Bucharest, Romania\\
\email{gabriel.istrate@unibuc.ro}
}

\maketitle

\begin{abstract}

We investigate a signed version of the Hammersley process, a discrete process on words related to a property of integer sequences called heapability (Byers et al., ANALCO 2011). The specific version that we investigate corresponds to a version of this property for signed sequences.

We give a characterization of the words that can appear as images the signed Hammersley process. In particular we show that the language of such words is the intersection of two deterministic one-counter languages.
\end{abstract}

\textbf{Keywords: } signed Hammersley process, formal languages, heapability.

\section{Introduction}

Consider the following process $H_k$ that generates strings over the alphabet $\Sigma_{k}=\{0,1,\ldots, k\}$, $k\geq 1$: start with the empty word $w_0=\lambda$. Given word $w_n$, to obtain $w_{n+1}$ insert a $k$ at an arbitrary position of $w_n$ (we will regard the new $k$ as a "particle with $k$ lives").  If $w_n$ contained at least one nonzero letter to the right of the newly inserted $k$ then decrease by 1 the leftmost such letter (that is the new particle takes one life from the leftmost live particle to its right). The process has been introduced in \cite{istrate2015heapable}, related to a special property of integer sequences called \textit{heapability} (see 
\cite{byers2011heapable} for a definition, and \cite{istrate2015heapable,heapability-thesis,istrate2016heapability,basdevant2016hammersley,basdevant2017almost,balogh2017computing,hammersley-rochian} for subsequent work), and has been called (by analogy to the classical case $k=1$, which it generalized) \textit{the Hammersley process}. This process has proved essential \cite{aldous1995hammersley} in the scaling analysis of the longest-increasing subsequence of a random permutation, one of the celebrated problems in theoretical probability \cite{romik2015surprising}, and was subsequently rigorously analyzed in \cite{basdevant2016hammersley,basdevant2017almost}. 

In \cite{hammersley-rochian} we have undertaken a language-theoretic approach to the study of process $H_k$ by characterizing the language $L(H_k)$ of possible words in the process $H_k$. It was shown that the language $L(H_k)$ is regular for $k=1$ and context-free but nonregular for $k\geq 2$. We also gave an algorithm to compute the formal power series associated to the language $L(H_k)$ (where the coefficient of each word $w$ is equal to its multiplicity in the process $H_k$). We attempted to use this algorithm to study the typical large-scale behavior of process $H_k$ (while this study provided valuable information, the convergence to the limit behavior turned out to be fairly slow). 

The purpose of this paper is to study (with language-theoretic tools similar to those in \cite{hammersley-rochian}) a variant of the Hammersley process that we will call \textit{the signed Hammersley process}, and we will denote by $H_{k}^{sign}$. This is a process over the alphabet $\Gamma_{k}:=\{0^{+},0^{-},1^{+},1^{-}, 2^{+},2^{-}, \ldots, k^{+},k^{-}\}$ and differs from process $H_k$ in the following manner: 
\begin{itemize} 
\item The newly inserted letter will always be a \textit{signed} version of $k$, that is, it will either be a $k^{+}$ or a $k^{-}$.
\item  When a $k^{-}$ is inserted it subtracts 1 from the closest (if any) nonzero letter to its right having positive polarity. That is, inserting $k^{-}$ turns a $k^{+}$ into a $(k-1)^{+}$, a $(k-1)^{+}$ into a $(k-2)^{+}$, $\ldots$, a $1^{+}$ into a $0^{+}$, but has no effect on letters of type $0^{+}$ or letters with negative polarity.  
\item On the other hand, inserting a $k^{+}$ subtracts a 1 from the closest digit with negative polarity and nonzero value (i.e. one in  $\{1^{-}, \ldots, k^{-}\}$), if any, to its right. 
\end{itemize} 

\begin{example} 
A depiction of the first few possible steps in the evolution of the ordinary Hammersley tree process is presented in Figure~\ref{fig-1}(a). Similarly for the signed Hammersley tree process, see Figure~\ref{fig-1}(b). 
\end{example} 

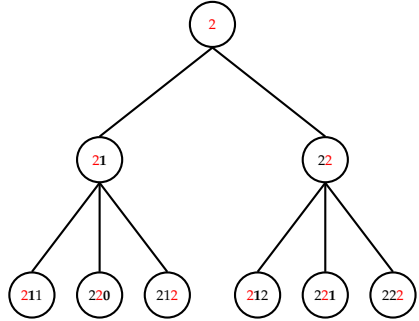
\begin{figure} 
\begin{center} 

\begin{tikzpicture}[
    scale=0.6, transform shape, thick,
    every node/.style = {draw, circle, minimum size = 10mm},
    grow = down,  
    level 1/.style = {sibling distance=5cm},
    level 2/.style = {sibling distance=1.5cm}, 
    level 3/.style = {sibling distance=1cm}, 
    level distance = 3cm
  ]

\node[shape=circle] {\textcolor{red}{2}}
    child{ node [shape=circle] {\textcolor{red}{2}{\bf 1}} 
    child{ node [shape=circle] {\textcolor{red}{2}{\bf 1}1}}
         child{ node [shape=circle] {2\textcolor{red}{2}{\bf 0}}}
         child{ node [shape=circle] {21\textcolor{red}{2}}}	
            }                 
    child{ node [shape=circle] {2\textcolor{red}{2}}
    	child{ node [shape=circle] {\textcolor{red}{2}{\bf 1}2}}
         child{ node [shape=circle] {2\textcolor{red}{2}{\bf 1}}}
         child{ node [shape=circle] {22\textcolor{red}{2}}}
         }
		
; 
\end{tikzpicture}
\end{center} 
\label{fig-1}
\caption{(a). Words in the binary Hammersley process ($k=2$). The first node $\lambda$ is omitted. Insertions are in red.  Positions that lost a life at the current stage are bolded.} 
\end{figure} 
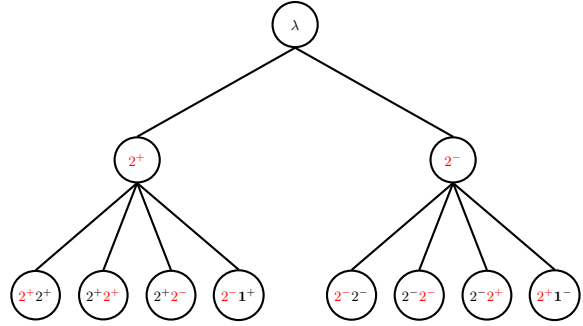
\begin{figure} 
\begin{center} 
\begin{tikzpicture}[
    scale=0.6, transform shape, thick,
    every node/.style = {draw, circle, minimum size = 10mm},
    grow = down,  
    level 1/.style = {sibling distance=7cm},
    level 2/.style = {sibling distance=1.5cm}, 
    level 3/.style = {sibling distance=1cm}, 
    level distance = 3cm
  ]

\node[shape=circle] {$\lambda$}
    child{ node [shape=circle] {$\textcolor{red}{2^{+}}$} 
    child{ node [shape=circle] {$\textcolor{red}{2^{+}}2^{+}$}}
    child{ node [shape=circle] {$2^{+}\textcolor{red}{2^{+}}$}}
         child{ node [shape=circle] {$2^{+}\textcolor{red}{2^{-}}$}}
         child{ node [shape=circle] {$\textcolor{red}{2^{-}}\mathbf{1^{+}}$}}	
            }                 
    	child{ node [shape=circle] {\textcolor{red}{$2^{-}$}} 
    child{ node [shape=circle] {$\textcolor{red}{2^{-}}2^{-}$}}
    child{ node [shape=circle] {$2^{-}\textcolor{red}{2^{-}}$}}
         child{ node [shape=circle] {$2^{-}\textcolor{red}{2^{+}}$}}
         child{ node [shape=circle] {$\textcolor{red}{2^{+}}\mathbf{1^{-}}$}}	
            }	
; 
\end{tikzpicture}
\end{center}
\caption{Words in the binary signed Hammersley tree process. Insertions are in red.  Positions that lost a life at the current stage are bolded.} 
\label{fig-1-prime}
\end{figure}

In this paper we provide a complete characterization of the language of words in the signed Hammersley  process (see Definition~\ref{def-1} for the definition of $k$-dominant strings): 

\begin{theorem} A word $z\in \Gamma_k^{*}$ is generated by the signed Hammersley process if and only if $z$ and all its nonempty prefixes are $k$-dominant. 
\label{thm-main} 
\end{theorem}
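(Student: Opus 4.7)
The plan is to prove both implications of Theorem~\ref{thm-main} by induction, with nearly symmetric arguments on the two sides and a single combinatorial observation carrying the weight in the harder direction.

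\textbf{Forward direction.} I would proceed by induction on the number of insertion steps of $H_k^{sign}$. The empty word has no nonempty prefixes, so the claim holds vacuously. For the inductive step, let $w'$ arise from $w$ by inserting a letter $k^{s}$ at position $i$, possibly accompanied by a decrement of the letter $w_{j}$ sitting at the leftmost position $j > i$ of opposite polarity with nonzero value. Prefixes of $w'$ of length below $i$ are unchanged and remain $k$-dominant by the induction hypothesis. For longer prefixes one carries out a short case analysis on (i) the polarity $s$, and (ii) whether the affected position $j$ lies inside the prefix. In each case the inequalities of Definition~\ref{def-1} are preserved because adding one $k^{s}$-letter donates $k$ units of ``$s$-polarity life'' together with one letter of polarity $s$, and any decrement on the opposite side is compensated exactly by the $+1$ in the matching count on the $s$-side.

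\textbf{Backward direction.} The heart of this direction is a reduction step: from any nonempty $z$ all of whose nonempty prefixes are $k$-dominant, one can recover a strictly shorter word $z'$ of the same kind by inverting a single step of $H_k^{sign}$. I would take $i$ to be the position of the \emph{rightmost} $k$-valued letter of $z$; such a position exists because $k$-dominance of the length-one prefix $z_1$ already forces $z_1$ to have value $k$. Let $s$ be the polarity of $z_i$ and let $j > i$ be the position (if any) of the leftmost nonzero letter of opposite polarity to the right of $i$. Because every letter after position $i$ has value strictly less than $k$ by the choice of $i$, the letter $z_j$ has value at most $k-1$, and the inverse operation---deleting $z_i$ and, if $j$ exists, incrementing the value at position $j$ by one---produces a well-formed word $z' \in \Gamma_k^{*}$ from which inserting $z_i$ at position $i$ under the rules of $H_k^{sign}$ recovers $z$. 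The induction then closes, provided every nonempty prefix of $z'$ is again $k$-dominant.

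\textbf{The main obstacle}, and the crux of the proof, is precisely this last claim. Three prefix regimes must be considered: prefixes ending before position $i$ (unchanged, hence $k$-dominant by hypothesis); prefixes spanning both $i$ and $j$ (where the deletion and the increment cancel arithmetically, so the target inequalities follow directly from those for $z$); and the delicate intermediate regime of prefixes $p$ that contain $z_i$ but not $z_j$. In the last case, the corresponding prefix of $z'$ loses one letter of polarity $s$, which tightens the opposite-polarity dominance bound by one. The key combinatorial observation is that the relevant inequality was already \emph{strict} for $p$ in $z$: if the dominance bound controlling the $\overline{s}$-losses versus $s$-insertions in $p$ were tight, then every $s$-polarity letter of $p$ would have had to attack an $\overline{s}$-polarity letter lying inside $p$, which in particular would force the attack target of $z_i$ to lie in $p$; but that target is precisely $z_j$, which is outside $p$, a contradiction. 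Consequently the strict inequality holds, the reduction preserves $k$-dominance of every prefix of $z'$, and iterating the reduction down to $\lambda$ yields a valid derivation of $z$ in the signed Hammersley process.
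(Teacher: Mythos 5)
Your forward direction is essentially the paper's (the paper packages it as Lemmas~\ref{claim:startWith2}--\ref{claim:hasMore2}, a prefix-closure argument plus a global particle count, rather than a step-by-step induction, but the content is the same). The genuine problem is in the backward direction: your inverse step is the wrong one. You undo the last insertion by deleting the rightmost $k$-valued letter $z_i$ and incrementing the leftmost \emph{nonzero} letter of opposite polarity to its right, doing nothing if no such letter exists. But the victim of the last insertion appears in $z$ with its value \emph{already decreased} by that very insertion, so it may now be a zero-valued letter, and sometimes the only usable inverse is the one that increments such a zero. Concretely, take $k=1$ and $z=1^{-}1^{+}0^{-}$: all nonempty prefixes are $1$-dominant (the two sums in (\ref{ineq-1}), (\ref{ineq-2}) are $(0,1)$ for $1^{-}$, $(1,1)$ for $1^{-}1^{+}$, and $(0,2)$ for $z$, with the inequality corresponding to the first letter strict each time), and $z$ is indeed an output. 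Your reduction finds no nonzero negative letter to the right of the $1^{+}$, deletes it, and produces $z'=1^{-}0^{-}$, which violates (\ref{ineq-1}); so the induction hypothesis cannot be invoked, and in fact $1^{-}0^{-}$ is unreachable. A derivation of $z$ whose last step inserts the $1^{+}$ must come from $1^{-}1^{-}$, i.e.\ the inverse step has to increment the $0^{-}$ --- exactly the choice made in the paper's proof of Lemma~\ref{lemma:semiInL2}, where the incremented position is the \emph{first} letter of opposite polarity after the deleted one, regardless of its value (it cannot be $k$-valued, so the increment is legal).

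The auxiliary observation you lean on for the intermediate prefixes has the same defect and is moreover circular: you argue strictness by saying that otherwise every $s$-polarity letter of $p$ ``would have had to attack'' an $\overline{s}$-letter inside $p$, but in this direction $z$ is only known to satisfy the arithmetic conditions, not to be an output, so there is no derivation whose attacks you may quote; and in the example above the relevant bound is in fact tight (the sum in (\ref{ineq-1}) equals $0$ for $z$), because the target of the would-be attack is a letter that your choice of $j$ skips over. The paper's choice removes the difficulty without any strictness claim: since every letter strictly between the deleted $k$-valued letter and the incremented position has the same polarity as the deleted letter, a prefix $p$ ending in that region satisfies (\ref{ineq-1}) with value at least $1$ more than the prefix ending just before the deleted letter (whose value is $\geq 0$ by hypothesis), so after the deletion it stays nonnegative, while (\ref{ineq-2}) is untouched; prefixes containing the incremented position are handled by the exact cancellation you already noted. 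Replacing your definition of $j$ by ``first opposite-polarity letter after $i$, of any value'' and redoing the three prefix regimes along these lines repairs the argument.
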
 

\begin{corollary} For every $k\geq 1$ if $L(H_{k}^{sign})$ is the language of words that appear in the signed Hammersley process there exist two deterministic context-free languages (in fact $L_1,L_2$ are even deterministic one-counter languages, see \cite{valiant1975deterministic}) s.t. 
$L(H_k^{sign})=L_1\cap L_2$. 
\label{cor-1}
\end{corollary}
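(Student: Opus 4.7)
The approach is to leverage Theorem~\ref{thm-main}, which rewrites $L(H_k^{sign})$ as the set of words whose every nonempty prefix is $k$-dominant. I would then exhibit two deterministic one-counter languages whose intersection is precisely this prefix-closed set.

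First I would unpack Definition~\ref{def-1} in light of the process dynamics. The insertion rules are symmetric in the two polarities: a $k^{-}$ only ever decrements positive letters and a $k^{+}$ only ever decrements negative ones, each taking one unit of life from the nearest nonzero target to its right. A routine insertion-history argument shows that the decrementing letter stays to the left of the decremented one in every subsequent word. Hence for any prefix $w$ of a producible word, writing $n^{\pm}(w)$ for the number of $k^{\pm}$ letters in $w$ and
\[
D^{+}(w):=\sum_{j^{+}\in w,\,j<k}(k-j),\qquad D^{-}(w):=\sum_{j^{-}\in w,\,j<k}(k-j),
\]
one must have $D^{+}(w)\le n^{-}(w)$ and $D^{-}(w)\le n^{+}(w)$. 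I expect $k$-dominance to be exactly the conjunction of these two linear inequalities.

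Granted this, I would set
\[
L_{1}=\{z\in\Gamma_{k}^{*}\mid n^{-}(w)-D^{+}(w)\ge 0\text{ for every prefix } w\preceq z\},
\]
and define $L_{2}$ symmetrically. Each is accepted by a deterministic pushdown automaton equipped with a bottom-marker and a single non-bottom stack symbol whose height stores the running counter $n^{-}(w)-D^{+}(w)$: each letter triggers a bounded deterministic adjustment ($+1$ for $k^{-}$, $-(k-j)$ for $j^{+}$ with $j<k$, and $0$ otherwise), and the automaton rejects the moment the counter would dip below zero. This is a deterministic one-counter machine in the sense of \cite{valiant1975deterministic}, and Theorem~\ref{thm-main} then delivers $L(H_{k}^{sign})=L_{1}\cap L_{2}$.

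The main obstacle is the single substantive step above, namely checking that Definition~\ref{def-1} really does split into the two polarity-local inequalities I anticipate. If instead the definition couples the two polarities, one would need either a two-dimensional counter---breaking the one-counter property---or a subtler decomposition into two decoupled counter languages. The symmetry of the insertion rules and the shape of the corollary statement both point to the clean decomposition being the correct reading, after which the rest is routine bookkeeping of the counter updates for the $2(k+1)$ letters of $\Gamma_{k}$.
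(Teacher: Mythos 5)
Your overall strategy---invoke Theorem~\ref{thm-main} and check each of the two prefix-wise conditions with a deterministic one-counter automaton that makes a bounded counter update per letter and rejects the moment the counter would go negative---is the same as the paper's. But the step you yourself flagged as the main obstacle is exactly where the proposal fails: the two inequalities you write down are not the ones in Definition~\ref{def-1}, and they are not even necessary conditions for membership in $L(H_k^{sign})$. Your $L_1$ demands $D^{+}(w)\le n^{-}(w)$ where $n^{-}(w)=|w|_{k^{-}}$ counts only letters \emph{currently} equal to $k^{-}$. However, a particle inserted as $k^{-}$ that decremented a positive letter can itself be decremented afterwards by $k^{+}$ insertions to its left, so in the observed word it may read $j^{-}$ for any $j<k$. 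Concretely, for every $k\ge 1$ the word $k^{+}(k-1)^{-}(k-1)^{+}$ (for $k=1$: $1^{+}0^{-}0^{+}$) is produced by inserting $k^{+}$, then $k^{-}$ at the front, then $k^{+}$ at the front; it satisfies Definition~\ref{def-1} and lies in $L(H_k^{sign})$, yet it has $D^{+}=1$ and contains no letter $k^{-}$ at all, so it violates your inequality and is excluded from your $L_1$. Hence with your choice of $L_1,L_2$ the equality $L(H_k^{sign})=L_1\cap L_2$ is false.

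The missing idea is precisely what the terms $\sum_{i=0}^{k-1}|z|_{i^{+}}$ and $\sum_{i=0}^{k-1}|z|_{i^{-}}$ in~(\ref{ineq-1}) and~(\ref{ineq-2}) encode: every unit of life lost by positive letters must be charged to a negative-polarity letter of \emph{any} current value, since decremented letters are never removed, only relabelled. So the correct counter for condition~(\ref{ineq-2}) adds $+1$ for every negative letter (not just $k^{-}$) and subtracts $k-j$ for each $j^{+}$ with $j<k$, with $k^{+}$ neutral; symmetrically for~(\ref{ineq-1}). Each condition is still a single linear inequality with bounded per-letter updates, so the coupling you worried about (needing a two-dimensional counter) does not arise---this is exactly the paper's automata $A_1,A_2$, which push one symbol per letter of the relevant polarity and pop $i$ symbols on reading $(k-i)^{\mp}$. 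You also omit two finite-state ingredients of $k$-dominance that the paper builds into the automata: the first letter must be $k^{+}$ or $k^{-}$, and the inequality corresponding to that first letter must be strict (handled by treating the first letter specially with parallel sets of states). With the corrected counter semantics and these additions your construction becomes the paper's proof; as written, it has a genuine gap.
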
 

Not surprisingly, the motivation behind our study is a constrained variant of heapability for signed sequences (permutations) $(\sigma(1),\sigma(2),\ldots, \sigma(n))$. Briefly, in the setting we consider every integer $\sigma(i)$ in the sequence comes with a sign $\tau(i)\in \{\pm 1\}$. An integer $\sigma(i)$ can only become the child of an integer $\sigma(j)$ in a heap-ordered tree when $\sigma(j)<\sigma(i)$ and $\sigma(i)$ has the opposite sign (i.e. $\tau(i)=-\tau(j)$). When the signed permutation cannot be inserted into a single heap-ordered tree we are, instead, concerned with inserting it into \textrm{the smallest possible number} of heap-ordered trees. 

The outline of the paper is as follows: In Section 2 we review some preliminary notions we will need in the sequel. Our main result is presented in Section 3. We then give (Section 4) an algorithm for computing the formal power series associated to the signed Hammersley process. Later sections are no longer primarily string-theoretic, and instead attempt to explain the problem on signed integer sequences that motivated our results: in Section 5 we define the heapability of signed permutations and show that a greedy algorithm partitions a signed permutation into a minimal number of heap-ordered trees. This motivates and explains the definition of the signed Hammersley process, that describes the dynamics of "slots" generated by this algorithm. 

We refrain in this paper, though, from invastigating the scaling in the Ulam-Hammersley problem for signed permutations, and leave this for future work. 
\section{Preliminaries}

We will assume general acquaintance with formal language theory, as presented in, say, \cite{harrison1978introduction}, and its extension to (noncommutative) formal power series. For this last topic useful (but by no means complete) references are \cite{salomaa2012automata,berstel2011noncommutative}. 
Denote by $\Gamma_{k}$ the alphabet $\{0^{+},0^{-},\ldots, k^{+},k^{-}\}$. As usual, for $z\in \Gamma_{k}^{*}$ and $p\in \Gamma_k$, denote by $|z|$ the length of $z$ and 
 by $|z|_{p}$ the number of appearances of letter $p$ in $z$. Also, for $j\in 0,\ldots k$ define $|z|_j=|z|_{j^{+}}+|z|_{j^{-}}.$

\begin{definition} 
\label{def-1}
String $z\in \Gamma_{k}^{*}$ is called \textit{$k$-dominant} iff it starts with a letter from the set $\{k^{+},k^{-}\}$, and the following two conditions are satisfied: 
\begin{equation} 
|z|_{k^{+}}-\sum\limits_{i= 1}^{k} i\cdot |z|_{(k-i)^{-}}+\sum_{i=0}^{k-1} |z|_{i^{+}}\geq 0
\label{ineq-1}
\end{equation} 
and 
\begin{equation} 
|z|_{k^{-}}-\sum\limits_{i=1}^{k} i\cdot |z|_{(k-i)^{+}}+\sum\limits_{i=0}^{k-1} |z|_{i^{-}}\geq 0
\label{ineq-2}
\end{equation} 
at least one of the inequalities being strict, namely the one that corresponds to the first letter of $z$.  
\end{definition} 

\begin{example} 
The only words $z\in \Gamma_{k}^{1}$ that are $k$-dominant are $z=k^{+}$ and $z=k^{-}$. 
\end{example} 

\begin{example} For $z\in \{0^{+},0^{-},k^{+},k^{-}\}^{*}$ (or $k=1$) the two conditions in Definition~\ref{def-1} become $|z|_{k^{+}}-k\cdot |z|_{0^{-}}+|z|_{0^{+}}\geq 0$ 
and $|z|_{k^{-}}-k\cdot |z|_{0^{+}}+|z|_{0^{-}}\geq 0$. 
\end{example} 

\begin{definition} 
A formal power series with nonnegative integer coefficients is a function $f:\Gamma_{k}^{*}\rightarrow \mathbf{N}$. We will denote by ${\bf N}(<\Gamma_{k}>)$ the set of such formal power series. 
\end{definition} 

\begin{definition} Given $k\geq 1$, {\rm the signed Hammersley power series of order $k$} is the formal power series 
  $F_{k}\in {\bf N}(<\Gamma_{k}>)$ that counts the multiplicity of a given word $w\in \Gamma_{k}^{*}$ in the signed Hammersley process of order $k$.  

  The {\rm signed Hammersley language of order $k$,} $L(H_k^{sign})$, is defined to be the support of $F_{k}$, i.e. the set of words $w\in \Gamma_{k}^{*}$ that are outputs of the signed Hammesley process of order $k$. 
\end{definition}

\section{Main Result: The language of the process $H_{k}^{sign}$}

The proof of Theorem~\ref{thm-main} proceeds by first showing that every word generated by the signed Hammersley process satisfies the condition in the theorem, and conversely, every string that satisfies the conditions can be generated by the process. For the first claim we need a couple of simple lemmas: 

\begin{lemma} \label{claim:startWith2}
Every word in $L(H_k^{sign})$ starts with a $k^{+}$ or a $k^{-}$. 
\end{lemma}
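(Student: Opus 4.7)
The plan is to prove Lemma~\ref{claim:startWith2} by a straightforward induction on the number of insertion steps. The statement is really about nonempty words in $L(H_k^{sign})$, since the process starts from $\lambda$ and Lemma~\ref{claim:startWith2} should be read as applying to the nontrivial outputs.

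For the base case, I would observe that after the very first insertion into $\lambda$, the resulting word consists of a single letter, and by the definition of $H_k^{sign}$ the newly inserted letter is always a signed version of $k$, i.e. it belongs to $\{k^{+},k^{-}\}$.

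For the inductive step, suppose $w\in L(H_k^{sign})$ is nonempty with first letter already in $\{k^{+},k^{-}\}$, and consider the word $w'$ obtained from $w$ by one further insertion. I would split into two cases according to the insertion position. If the new letter is inserted at the leftmost position, then it is the first letter of $w'$ by construction and is a signed $k$, so we are done. Otherwise the insertion happens strictly to the right of $w$'s first letter; here I would invoke the crucial structural fact about the process, namely that the decrement rules (both the $k^{-}$ rule and the $k^{+}$ rule) only ever act on a letter that is strictly to the right of the insertion point. Hence the first letter of $w$ is left untouched and remains the first letter of $w'$, and by the inductive hypothesis it lies in $\{k^{+},k^{-}\}$.

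There is no real obstacle in this argument; the only thing to be careful about is the correct formulation of the invariant (that the first letter is preserved by any non-leftmost insertion), which follows directly from the ``to its right'' clause in the definition of $H_k^{sign}$. This same observation will presumably be reused later, since it is the reason the signed Hammersley process never produces a word starting with a strictly smaller digit or with a $0^{\pm}$.
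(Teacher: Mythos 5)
Your proof is correct and rests on exactly the same observation as the paper's one-line argument, namely that a letter can only be decremented by an insertion occurring to its left, so the first letter of the word is always either a freshly inserted signed $k$ or an untouched signed $k$ from before. You merely package this as an explicit induction on the number of insertions, which the paper leaves implicit.
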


\begin{proof}
Digits are only modified by insertions to their left. So the leftmost $k^{+}$ or $k^{-}$ remains unchanged. 
\end{proof}\qed

\begin{lemma}\label{claim:prefixIsWord}
$L(H_k^{sign})$ is closed under prefix.
\end{lemma}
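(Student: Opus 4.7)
The plan is, given any derivation $\lambda = u_0 \to u_1 \to \cdots \to u_n = w$ of $w \in L(H_k^{sign})$ and a target prefix length $m$, to extract a sub-derivation producing $w[1..m]$ by retaining only those insertion steps whose newly inserted letter ends at one of the first $m$ positions of $w$. This makes sense because the process preserves the relative order of existing letters (each insertion only shifts subsequent letters one position to the right), so if we call a letter \emph{surviving} when its final position in $w$ lies in $\{1,\ldots,m\}$, then in every intermediate word $u_i$ the surviving letters occupy a contiguous prefix $v_i$, with $v_n=w[1..m]$.

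The restricted derivation then processes just the steps $i$ with $p(i)\le m$ (where $p(i)$ is the final position of the letter inserted at step $i$), in the same relative order, and uses the same insertion position $q_i$ that the original step used in $u_{i-1}$. I would verify by induction on $i$ that after every eligible restricted step the current word is exactly $v_i$. A kept step has its inserted letter landing inside $v_i$, which forces $q_i\le |v_{i-1}|+1$ (so the insertion is legal in the shorter word), while a skipped step inserts at some position $q_i>|v_{i-1}|$ and any letter it modifies lies strictly further right, so $v_{i-1}$ is left untouched.

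The only subtle point is to check that applying the modification rule (``decrement the closest appropriate letter strictly to the right'') to $v_{i-1}$ produces the same change to surviving letters as applying it to $u_{i-1}$. This follows from the prefix structure: if the original's target letter is itself surviving, then every letter between the insertion point and it is surviving too, so both derivations select the same letter; if the original's target letter is non-surviving, then the positions $q_i,\ldots,|v_{i-1}|$ of $v_{i-1}$ contain the same inappropriate letters as in $u_{i-1}$, and the restricted step correctly makes no modification (the original's change happens outside the surviving prefix and so does not affect $v_i$). The main obstacle is really just organising this case analysis cleanly; once it is in place, the induction goes through and the restricted derivation witnesses $w[1..m]\in L(H_k^{sign})$ for every $m$.
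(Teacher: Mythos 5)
Your proposal is correct and follows the same idea as the paper's (one-line) proof: ignore all insertions whose letters land to the right of the chosen prefix position, and observe that the remaining insertions constitute a valid derivation of the prefix. Your write-up simply supplies the bookkeeping (surviving letters form a contiguous prefix, skipped insertions cannot touch it, kept insertions select the same modification target) that the paper leaves implicit.
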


\begin{proof}
Given $w\in L(H_k^{sign})$ and a position $p$ in $w$, just ignoring all insertions to the right of $p$ yields a construction for the prefix of $w$ ending at $p$. 
\end{proof}\qed 

\begin{lemma}\label{claim:hasMore2}
Every word in $L(H_k^{sign})$ is $k$-dominant.  
\end{lemma}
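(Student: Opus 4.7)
My plan is to induct on the number of insertions used to construct $w$. The base case $|w|=1$ is immediate: $w \in \{k^+, k^-\}$, and one of (\ref{ineq-1}), (\ref{ineq-2}) evaluates to $1$ while the other evaluates to $0$, so $w$ is $k$-dominant. For the inductive step, suppose $w \in L(H_k^{sign})$ is $k$-dominant and $w'$ is obtained from $w$ by a single insertion. Lemma~\ref{claim:startWith2} supplies the leading-letter requirement, so it remains to show that the increments $\Delta_1, \Delta_2$ in the left-hand sides of (\ref{ineq-1}) and (\ref{ineq-2}) satisfy $\Delta_1, \Delta_2 \geq 0$: this both preserves non-strict validity and keeps strict any inequality that was already strict in $w$.

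The heart of the argument is a short case analysis on the inserted letter. Say we insert a $k^+$. If no nonzero negative letter lies to its right then only $|w|_{k^+}$ changes, giving $\Delta_1 = +1$ and $\Delta_2 = 0$. Otherwise the closest nonzero negative letter is some $j^-$ with $1 \leq j \leq k$, and the triple $(|w|_{k^+}, |w|_{j^-}, |w|_{(j-1)^-})$ shifts by $(+1,-1,+1)$. Reading off the coefficient $-(k-j)$ on $j^-$ and $-(k-j+1)$ on $(j-1)^-$ in (\ref{ineq-1}), and the coefficient $+1$ on every $i^-$ in (\ref{ineq-2})---with the boundary convention that $k^-$ has coefficient $0$ in (\ref{ineq-1}) and $k^+$ has coefficient $0$ in (\ref{ineq-2}), which absorbs the endpoint case $j=k$---the telescoping identity $(k-j+1) - (k-j) = 1$ yields $\Delta_1 = 1 + (k-j) - (k-j+1) = 0$ and $\Delta_2 = 0 - 1 + 1 = 0$. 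Inserting a $k^-$ is handled by swapping the roles of the two inequalities throughout.

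The only real difficulty is bookkeeping at the boundary $j=k$, where the decreasing $k^-$ drops out of (\ref{ineq-1}) entirely; once this is absorbed into the stated coefficient convention, the computation above handles all subcases uniformly. Everything else is conceptually straightforward: the local rule of the signed Hammersley process is engineered precisely so that these two linear forms over letter counts are monotone under every legal insertion, which is exactly what the induction needs.
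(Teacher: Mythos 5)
Your increment computation is correct, and it is essentially the paper's own counting argument in differential form. The paper proves the lemma in one shot by observing that the left-hand side of (\ref{ineq-1}) equals the number $\lambda^{+}$ of inserted $k^{+}$'s that never decremented anything (each surviving letter $(k-i)^{-}$ accounting for exactly $i$ decrementing $k^{+}$-insertions), and symmetrically for (\ref{ineq-2}); your induction verifies precisely the local version of this identity, namely that a non-killing insertion raises the corresponding left-hand side by $1$ while every other insertion leaves both left-hand sides unchanged ($\Delta_1=\Delta_2=0$, including the boundary case $j=k$, which you handle correctly since $k^{-}$ indeed has coefficient $0$ in (\ref{ineq-1})). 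So as far as the two inequalities are concerned, your proof is sound and buys nothing more or less than the paper's: it is the same conservation law, organized as an invariant preserved step by step rather than as a global particle count.

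The genuine gap is in the strictness clause of Definition~\ref{def-1}. You argue that $\Delta_1,\Delta_2\ge 0$ ``keeps strict any inequality that was already strict in $w$,'' but the definition requires strictness of the inequality \emph{corresponding to the first letter of $w'$}, and a front insertion can change that letter, hence which inequality is designated. Concretely, take $w=k^{+}$, where (\ref{ineq-1}) is strict and (\ref{ineq-2}) holds with equality, and insert a $k^{-}$ at the front: $w'=k^{-}(k-1)^{+}$, whose first letter is $k^{-}$ while the left-hand side of (\ref{ineq-2}) is still $0$. Your induction does not address this switch, so it does not establish the first-letter strictness condition; what it does correctly establish is that at least one of the two inequalities is strict (the one made strict by the chronologically first insertion can never lose its slack). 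In fairness, this cannot be repaired: $w'=k^{-}(k-1)^{+}$ is an output of the process (it appears in Figure~\ref{fig-1-prime} for $k=2$), so the first-letter strictness claim is false as literally stated, and the paper's own proof merely asserts ``the corresponding one being strictly $>0$'' without justification and runs into the same example. Treat this as a defect of the stated definition rather than of your computation, but do not claim that monotonicity of the two forms yields the strictness condition of Definition~\ref{def-1} --- it yields only ``at least one inequality strict.''
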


\begin{proof}
Let us count the number of particles of type $(k-i)^{+}$ in $z$, $i\geq 1$. Such a particle arises from a $k^{+}$ particle through a chain 
\[
k^{+}\stackrel{k^{-}}{\rightarrow} (k-1)^{+} \stackrel{k^{-}}{\rightarrow} \ldots \stackrel{k^{-}}{\rightarrow} (k-i)^{+}
\]
requiring $i$ particles of type $k^{-}$ and killing one particle of type of type $k^{+}$. 
Similarly, a particle of type $(k-i)^{-}$, $i\geq 1$, arises from a $k^{-}$ particle through a chain 
\[
k^{-}\stackrel{k^{+}}{\rightarrow} (k-1)^{-} \stackrel{k^{+}}{\rightarrow} \ldots \stackrel{k^{+}}{\rightarrow} (k-i)^{-}
\]
requiring $i$ particles of type $k^{+}$ and killing one particle of type of type $k^{-}$. 
There are also $\lambda^{+}$ particles of type $k^{+}$ that don't kill any particle, and similarly $\lambda^{-}$ particles of type $k^{+}$ that don't kill any particle. 

Thus the number of particles of type $k^{+}$ is 
\begin{equation} 
|z|_{k^{+}}=\lambda^{+}+\sum_{i\geq 1} i\cdot |z|_{(k-i)^{-}}-\sum_{i\geq 1} |z|_{(k-i)^{+}}
\label{ineq-3}
\end{equation} 
and similarly 
\begin{equation} 
|z|_{k^{-}}=\lambda^{-}+\sum_{i\geq 1} i\cdot |z|_{(k-i)^{+}}-\sum_{i\geq 1} |z|_{(k-i)^{-}}
\label{ineq-4} 
\end{equation} 
Putting the condition $\lambda^{+},\lambda^{-}\geq 0$ (the corresponding one being strictly $>0$) we infer that $z$ is $k$-dominant. 
\end{proof}\qed

From Lemmas~\ref{claim:startWith2},~\ref{claim:prefixIsWord} and~\ref{claim:hasMore2} it follows that any word $z\in L(H_k^{sign})$ satisfies the conditions~(\ref{ineq-1}) and~(\ref{ineq-2}) in the theorem. 

We show the converse as follows: 

\begin{lemma} \label{lemma:semiInL2} Every  
word $z$ satisfying conditions in Theorem~\ref{thm-main} is an output of the signed Hammersley process. 
\end{lemma}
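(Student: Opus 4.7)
The plan is to prove the statement by strong induction on $|z|$. The base case $|z|=1$ is immediate, since the two $k$-dominant words of length one, $k^{+}$ and $k^{-}$, are obtained by inserting the corresponding letter into the empty word. For the inductive step, given $z$ of length $n\geq 2$ satisfying the theorem's conditions, the goal is to exhibit a position $i$ with $z_i\in\{k^{+},k^{-}\}$ whose removal (together with a single compensating increment at some later position, if needed) produces a shorter word $z''$ that again satisfies those conditions. The inductive hypothesis then yields a derivation of $z''$, which we extend by inserting $z_i$ back at position $i$ to recover $z$.

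The key auxiliary observation is that if $W$ is $k$-dominant and its final letter is $k^{+}$, then Inequality~(\ref{ineq-1}) for $W$ is strict (and symmetrically for $k^{-}$ and Inequality~(\ref{ineq-2})). Indeed, writing $W=W'k^{+}$, Inequality~(\ref{ineq-1}) evaluated on $W$ equals that on $W'$ plus $1$, and $W'$, being empty or a $k$-dominant prefix of $W$, satisfies the inequality with value $\geq 0$.

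The inductive step splits on whether the last letter of $z$ lies in $\{k^{+},k^{-}\}$. If it does, take $z''=z_1\cdots z_{n-1}$, which is a prefix of $z$ and hence $k$-dominant with $k$-dominant prefixes; the derivation is extended by a pristine insertion of $z_n$ at the end. Otherwise, let $i$ be the position of the rightmost $k^{\pm}$ in $z$; such $i$ exists because $z_1\in\{k^{+},k^{-}\}$, and by hypothesis $i<n$. Assume $z_i=k^{+}$ (the $k^{-}$ case is symmetric). If $z$ contains a negative-polarity letter at some position $>i$, let $p$ be the first one; by the rightmost-$k^{\pm}$ property $z_p\in\{0^{-},\dots,(k-1)^{-}\}$, and we form $z''$ by deleting $z_i$ and replacing $z_p$ by the letter whose value is one larger. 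Otherwise, all positions $>i$ carry positive non-$k^{+}$ letters, and we form $z''$ by deleting $z_i$ alone. In either sub-case, one checks directly that inserting $k^{+}$ at position $i$ of $z''$ reproduces $z$.

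The main obstacle is verifying that $z''$, together with all its nonempty prefixes, is again $k$-dominant. A direct accounting shows that in the delete-and-increment sub-case both Inequalities~(\ref{ineq-1}) and~(\ref{ineq-2}) are preserved on every prefix of $z''$ that includes the incremented position; on prefixes that include the deletion site but not the increment, Inequality~(\ref{ineq-1}) drops by exactly $1$ while Inequality~(\ref{ineq-2}) is unchanged. In the pure-deletion sub-case, the drop is likewise confined to Inequality~(\ref{ineq-1}). To absorb this drop, use that all positions strictly between $i$ and $p$ (or strictly after $i$, in the pure-deletion sub-case) carry positive non-$k^{+}$ letters, each contributing $+1$ to Inequality~(\ref{ineq-1}) of every prefix that contains it, and combine this monotonicity with the auxiliary observation applied to the prefix $z_1\cdots z_i$ (which ends in $k^{+}$, hence has Inequality~(\ref{ineq-1}) strict). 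Finally, the corner case $i=1$ requires a short separate check: the dominance of the length-$2$ prefix $z_1z_2$ forces $z_2=(k-1)^{-}$ under the stated assumptions, so that the increment at $p=2$ promotes $z_2$ to $k^{-}$ and the resulting $z''$ still begins with a letter in $\{k^{+},k^{-}\}$.
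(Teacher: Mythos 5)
Your proof is correct and takes essentially the same route as the paper's: the paper argues by a shortest counterexample (your strong induction), removes the rightmost letter from $\{k^{+},k^{-}\}$, increments the first opposite-polarity letter to its right when one exists, and recovers $z$ by a single insertion, and your prefix-by-prefix accounting simply makes the paper's rather terse verification explicit. Two harmless quibbles: your auxiliary observation is false as literally stated (e.g.\ $1^{-}0^{-}1^{+}$ is $1$-dominant and ends in $1^{+}$, yet~(\ref{ineq-1}) holds with equality) --- it needs all prefixes of $W$ to be $k$-dominant, which is exactly the situation in which you apply it; and the corner case $i=1$ is in fact vacuous, since for $n\geq 2$ the length-two prefix $z_{1}z_{2}$ would violate the strict inequality~(\ref{ineq-1}) when $z_{2}$ is negative (including the value $z_{2}=(k-1)^{-}$ you claim is forced) and~(\ref{ineq-2}) when $z_{2}$ is positive, so the separate check you sketch (which moreover does not address the strictness requirement switching to~(\ref{ineq-2}) once $z''$ begins with $k^{-}$) can simply be replaced by this observation.
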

\begin{proof} 
Assume otherwise. Consider a  $z$ of smallest length that is not the output of the signed Hammersley process. Clearly $|z|>1$, since $z = k^{+}$ and $z=k^{-}$ are outputs.  
 
 Consider the last occurrence $l$ of one of the letters $k^{+},k^{-}$ in $z$. Without loss of generality assume that $l=k^{+}$ (the other case is similar). 
 
 If to the right of $l$ one had only positive letters (if any) then consider the word $w_1$ obtained by pruning the letter $l$ from $z$. Its prefixes $p$ are either prefixes of $z$ (hence $k$-dominant) or are composed of the prefix $w_0$ of $z$ cropped just before $l$ plus some more positive letters. For such a prefix we have to verify the two conditions~(\ref{ineq-1}) and~(\ref{ineq-2}) are satisfied: The second one follows directly from condition~(\ref{ineq-2}) for 
 the word $z$, since all letters of $z$ with negative polarity are present in $p$. As for the first one, it follows from the fact that condition~(\ref{ineq-1}) is valid for $w_0$, since some more positive letters are added. In conclusion, all prefixes of $w_1$ are $k$-dominant. 
As $|w_1|=|z|-1$, by the minimality of $z$, it follows that $w_1$ is an output of the signed Hammersley process. But then the process can output $z$ by simply simulating the construction for $w_1$ and then inserting the last $k^{+}$ of $z$ into $w_{1}$ in  its proper position. 

The remaining case is when $l=k^{+}$ has some negative letters to its right. Assume that $s$ is the first letter of negative type to the right of $l$. $s$ cannot be $k^{-}$, since we already saw the last of $k^{+},k^{-}$ at position $l$. Let $w_2$ be the word obtained by removing $l$ from $z$ and increasing the value of $s$ by 1. We claim that $w_2$ and all its prefixes are $k$-dominant.

\begin{equation}
|w_2|_{k^{+}}-\sum\limits_{i= 1}^{k} i\cdot |w_2|_{(k-i)^{-}}+\sum_{i=0}^{k-1} |w_2|_{i^{+}}=|z|_{k^{+}}-\sum\limits_{i= 1}^{k} i\cdot |z|_{(k-i)^{-}}+\sum_{i=0}^{k-1} |z|_{i^{+}} 
\label{ineq-10}
\end{equation} 
In fact this is true for every prefix of $w_2$, since losing a $k^{-}$ is not counted, and adding one doesn't change the sum. 

\begin{equation} 
|w_2|_{k^{-}}-\sum\limits_{i=1}^{k} i\cdot |w_2|_{(k-i)^{+}}+\sum\limits_{i=0}^{k-1} |w_2|_{i^{-}}=|z|_{k^{-}}-\sum\limits_{i=1}^{k} i\cdot |z|_{(k-i)^{+}}+\sum\limits_{i=0}^{k-1} |z|_{i^{-}} \geq 0
\label{ineq-11}
\end{equation} 
Again, this is true for every prefix of $z$ containing $s$, since losing $k^{-}$ is compensated by increasing $s$, so the sum doesn't change.  

Because of the minimality of $z$, $w_2$ is an output of the signed Hammersley process. But then we can obtain $z$ as an output of this process by first simulating the construction of $w_2$ and then inserting $l$ (which corrects the value of $s$ as well). 
\end{proof}\qed

\subsection{Proof of Corollary~\ref{cor-1}}

\begin{proof}

The claim that $L(H_k^{sign})$ is a the intesection of two deterministic one-counter languages follows from Theorem~\ref{thm-main} as follows: We construct two one-counter PDA's, $A_1$ and $A_2$, both having input alphabet $\Gamma_k$. Each of them enforces one of the conditions~(\ref{ineq-1}) and~(\ref{ineq-2}), respectively, including the fact that the appropriate inequality is strict. 
We describe $A_1$ in the sequel, the functioning of $A_2$ is completely analogous. 

The stack alphabet of $A_1$ comprizes two stack symbols, an effective symbol $*$ and the bottom symbol  $Z$. Transitions are informally specified in the following manner: 
\begin{itemize}
\item[-] $A_{1}$ starts with the stack consisting of the symbol $Z$. If the first symbol it reads is neither $k^{+}$ nor $k^{-}$,  $A_{1}$ immediately rejects. 
\item[-] $A_1$ has parallel (but disjoint) sets of states corresponding to the situations that the first letter is a $k^{+}$, respectively a $k^{-}$. 
\item[-] The first alternative requires that the difference in~(\ref{ineq-1}) is strictly positive. $A_1$ enforces this by first ignoring the first letter and then enforcing the fact that the difference in~(\ref{ineq-1}) \textit{corresponding to the suffix obtained by dropping the first letter} is always $\geq 0$. This is similar to the behavior in the case when the first letter is a $k^{-}$, which we describe below. 
\item[-] when reading any subsequent positive symbol, $A_{1}$ pushes a $*$ on stack. 
\item[-] if the next symbol of the input is $(k-i)^{-}$, $i\in 1\ldots k$, $A_{1}$ tries to pop $i$ star symbols from the stack. If this ever becomes impossible (by reaching $Z$), $A_{1}$ immediately rejects. 
\item[-] When reaching the end of the word $A_{1}$ accepts. 
\end{itemize}

\end{proof}\qed

\begin{observation} 
A nagging question that would seem to be a simple exercise in formal language theory (but which so far has escaped us) is proving that $L(H_{k}^{sign})$ is not a context free language. 

This is, we believe, plausible since the words in $L(H_{k}^{sign})$ have to satisfy not one but two inequalities,~(\ref{ineq-1}) and~(\ref{ineq-2}) that constrain their Parikh distributions. Verifying such inequalities would seemingly require two stacks. 

We have attempted (but failed) to prove this by applying Ogden's lemma. So we leave this as an open question. 
\end{observation}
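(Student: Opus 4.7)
The plan is to settle the open question by exhibiting a regular language $R$ with $L(H_k^{sign}) \cap R$ not context-free; non-context-freeness of $L(H_k^{sign})$ then follows by closure of CFL under intersection with regular languages. The natural slice $L(H_k^{sign}) \cap [k^{+}]^{+}[k^{-}]^{*}[(k-1)^{+}]^{*}[(k-1)^{-}]^{*}$ works out by prefix analysis to $\{[k^{+}]^{a}[k^{-}]^{b}[(k-1)^{+}]^{c}[(k-1)^{-}]^{d}: a\geq 1,\ b \geq c,\ d < a+c\}$, and Ogden on this slice stalls because the constraint $d<a+c$ allows a trade-off: if $v \subseteq [k^{+}]^{*}$ and $y \subseteq [(k-1)^{-}]^{*}$ with $|v|=|y|$, then pumping up grows $a$ and $d$ in lockstep and the inequality is preserved. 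The fix I propose is to pin $a$ to one by intersecting with the strictly smaller regular language
\[
R \;=\; [k^{+}]\,[k^{-}]^{*}\,[(k-1)^{+}]^{*}\,[(k-1)^{-}]^{*}.
\]

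First I would use Theorem~\ref{thm-main} to compute $L(H_k^{sign}) \cap R$. Re-running the prefix analysis with $a$ locked at one, inequality~(\ref{ineq-2}) evaluated at the prefix ending in $[(k-1)^{+}]^{c}$ yields $b \geq c$, while the strict form of~(\ref{ineq-1}) (strict because the first letter is $k^{+}$) evaluated at the full word reads $1+c-d>0$, i.e.\ $d \leq c$. All intermediate prefix inequalities are implied by these two, so
\[
L(H_k^{sign}) \cap R \;=\; \bigl\{[k^{+}]\,[k^{-}]^{b}\,[(k-1)^{+}]^{c}\,[(k-1)^{-}]^{d} : b \geq c,\ d \leq c\bigr\}.
\]

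Next I would apply Ogden's lemma to this slice using the pumping word $w_n = [k^{+}][k^{-}]^{n}[(k-1)^{+}]^{n}[(k-1)^{-}]^{n}$ (which sits at the boundary $b=c=d=n$), with every $[(k-1)^{-}]$-position marked. For any Ogden decomposition $w_n = uvxyz$, at least one of $v,y$ carries a marked letter, and a routine case split closes the argument. If $v$ or $y$ straddles two adjacent blocks, pumping up produces a letter outside its designated block and the word exits the form $[k^{+}][k^{-}]^{*}[(k-1)^{+}]^{*}[(k-1)^{-}]^{*}$; if the single $[k^{+}]$ lies inside $v$ or $y$, pumping up produces two copies of $[k^{+}]$, again leaving the form; otherwise $v$ and $y$ each sit inside a single block and pumping up strictly increases exactly one of $b$, $c$, $d$, violating either $b \geq c$ (when $v\subseteq [(k-1)^{+}]^{*}$ with $y\subseteq [(k-1)^{-}]^{*}$) or $d \leq c$ (when $v\subseteq [k^{-}]^{*}$, or $v=\varepsilon$, or $v\subseteq [(k-1)^{-}]^{*}$). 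Crucially, $a$ is now pinned to one and cannot be grown to absorb an increase in $d$; this is exactly the loophole that $R$ closes.

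The main obstacle, and the reason the question was left open in the paper, is identifying the regular $R$ that kills the trade-off loophole in the natural slice. Once $a$ is pinned to one, the Ogden case analysis collapses to essentially the classical non-context-freeness argument for $\{b^{n}c^{n}d^{n}\}$ at the level of the $[k^{-}]$-, $[(k-1)^{+}]$-, $[(k-1)^{-}]$-blocks, and requires no new pumping technology.
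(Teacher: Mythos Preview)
Your argument is correct and in fact resolves the question that the paper explicitly leaves open: the observation contains no proof at all, only the authors' report that an Ogden attempt failed. The slice you first mention, $[k^{+}]^{+}[k^{-}]^{*}[(k-1)^{+}]^{*}[(k-1)^{-}]^{*}$, is indeed where such an attempt stalls, for exactly the reason you give: the constraint $d<a+c$ lets $a$ and $d$ be pumped in lockstep without leaving the language.

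Your fix---replacing $[k^{+}]^{+}$ by a single $[k^{+}]$---is the missing idea. With $a$ pinned to $1$ the slice collapses to $\{[k^{+}][k^{-}]^{b}[(k-1)^{+}]^{c}[(k-1)^{-}]^{d}: b\ge c\ge d\}$, whose non-context-freeness is the classical three-block argument. Your prefix computation via Theorem~\ref{thm-main} is correct: the binding constraints are inequality~(\ref{ineq-2}) at the prefix ending with the last $(k-1)^{+}$, giving $b\ge c$, and the strict form of inequality~(\ref{ineq-1}) at the full word, giving $1+c-d>0$, i.e.\ $d\le c$; all intermediate prefixes impose weaker conditions. The Ogden case split then closes as you describe. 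One cosmetic slip: the phrase ``strictly increases exactly one of $b,c,d$'' is not literally accurate when, say, $v\subseteq[k^{-}]^{+}$ and $y\subseteq[(k-1)^{-}]^{+}$ (both $b$ and $d$ grow), but your case-by-case identification of which inequality fails is correct regardless. As a side remark, the ordinary pumping lemma on $w_n=[k^{+}][k^{-}]^{n}[(k-1)^{+}]^{n}[(k-1)^{-}]^{n}$ already suffices here, since $|vxy|\le p$ confines $vxy$ to at most two adjacent blocks; Ogden with marking is not actually needed, though it does no harm.
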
 

\section{Computing the formal power series of the signed Hammersley process}

In this section we study the following problem: given a word $z\in \Gamma_k^{*}$, compute the number of copies of $z$ generated by the signed Hammersley process. The problem is, of course, a generalization of the one in the previous section: we are interested not only if a word can/cannot be generated, but in how many ways. 

The real motivation for developing such an algorithm is  its intended use (similar to the use of the analog algorithm from the unsigned case in \cite{byers2011heapable}) to attempt to analyze the scaling in the Ulam-Hammersley problem for heap decompositions, and is beyond the scope of the current paper. Roughly speaking, by studying the multiplicity of words in the signed Hammersley process we hope to be able to exactly sample from the distribution of words for large $n$ and estimate, using such a sampling process the scaling constant for the decomposition of a random signed permutation into a minimal number of heaps. 

The corresponding problem for the unsigned case has been considered in several papers \cite{istrate2015heapable,istrate2016heapability,basdevant2016hammersley,basdevant2017almost}, and the application of formal power series techniques to the scaling problem is described in \cite{hammersley-rochian}.  
{\small
\begin{figure}[ht]
\begin{center}
	\fbox{
	    \parbox{11cm}{

\textbf{Input:} $k\geq 1,w\in \Gamma_{k}^{*}$ \\
\textbf{Output:} $F_{k}(w)$
\vspace{-2mm}
\begin{itemize} 
\item[] $S:= 0$. $w=w_{1}w_{2}\ldots w_{n}$
\item[] \textbf{if }$w\not \in L(H_k^{sign})$

   \item[]\hspace{5mm}\textbf{return }0 
 
\textbf{if } $w== k^{+}$ or $w==k^{-}$:
   
   \item[]\hspace{5mm} \textbf{return }1 
   
\textbf{for } $i\mbox{ in 1..n}$:
  
       \item[] \hspace{5mm} \textbf{if } $w_{i}==k^{+}\mbox{ and } \exists l: i+1\leq l \leq n: w_{l}\in \{0^{-},\ldots, k^{-}\}$: 
       
               \item[] \hspace{10mm} \textbf{let }$r=min(\{n+1\}\cup \{l\geq i+1: w_{l}\in \{1^{-},\ldots, k^{-}\}) \}$
               \item[] \hspace{10mm}\textbf{ if }$r==n+1$ or $w_r\neq k^{-}$: 
               
               \item[] \hspace{15mm}\textbf{for }$j\mbox{ in 1:r-1}$

               \item[]\hspace{20mm} \textbf{ if }$w_j==0^{-}$:
               \item[] \hspace{25mm} \textbf{       let    }$z=w_{1}\ldots w_{i-1}w_{i+1}\ldots w_{j-1}1^{-}w_{j+1}\ldots       \ldots w_{n}$ 
               \item[] \hspace{25mm} \textbf{       let    }$S := S +  Multiplicity(k,z)$ 
               \item[]\hspace{10mm} \textbf{ if }$r\leq n$:
               \item[] \hspace{15mm} \textbf{       let    }$z=w_{1}\ldots w_{i-1}w_{i+1}\ldots (w_r+1)^{-}w_{r+1}\ldots       \ldots w_{n}$ 
               \item[] \hspace{15mm} \textbf{       let    }$S := S +  Multiplicity(k,z)$
                 \item[]\hspace{5mm} \textbf{if } $w_{i}==k^{-}\mbox{ and } \exists l: i+1\leq l \leq n: w_{l}\in \{0^{+},\ldots, k^{+}\}$:         
       
               \item[]\hspace{10mm} \textbf{let }$r=min(\{n+1\}\cup \{l\geq i+1: w_{l}\in \{1^{+},\ldots, k^{+}\}) \}$
               \item[]\hspace{10mm} \textbf{ if }$r==n+1$ or $w_r\neq k^{+}$: 
               
               \item[]\hspace{15mm} \textbf{for }$j\mbox{ in 1:r-1}$
               \item[]\hspace{20mm} \textbf{ if }$w_j==0^{+}$:
               \item[] \hspace{25mm} \textbf{       let    }$z=w_{1}\ldots w_{i-1}w_{i+1}\ldots w_{j-1}1^{+}w_{j+1}\ldots       \ldots w_{n}$ 
               \item[] \hspace{25mm} \textbf{       let    }$S := S +  Multiplicity(k,z)$ 
               \item[]\hspace{10mm} \textbf{ if }$r\leq n$:
               \item[] \hspace{15mm} \textbf{       let    }$z=w_{1}\ldots w_{i-1}w_{i+1}\ldots (w_r+1)^{+}w_{r+1}\ldots       \ldots w_{n}$ 
               \item[] \hspace{15mm} \textbf{       let    }$S := S +  Multiplicity(k,z)$
  \item[]\hspace{5mm}\textbf{if }$w_{i}== k^{+}$ or $w_i=k^{-}$: 
  
      \item[] \hspace{10mm} \textbf{let }$Z=w_{1}\ldots \ldots w_{i-1}w_{i+1}\ldots w_n$  
      \item[] \hspace{10mm} $S := S +  Multiplicity(k,z)$ 
  
\item[] \textbf{return }S
\end{itemize} 
}
}
\end{center}
\caption{Algorithm Multiplicity(k,w)} 
\label{algo} 
\end{figure}}

\begin{theorem}
Algorithm Multiplicity in Figure~\ref{algo} correctly computes series $F_{k}$. 
\label{foo4} 
\end{theorem}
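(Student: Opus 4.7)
The plan is to proceed by strong induction on $|w|$. The base cases handle $w \notin L(H_k^{sign})$, where both sides vanish (using Theorem~\ref{thm-main} and the first line of the algorithm), and $w \in \{k^+, k^-\}$, where both sides equal $1$ since each such word arises from the unique single insertion into the empty word. For the inductive step, I would exploit the reverse-step recurrence
\begin{equation*}
F_k(w) \;=\; \sum_{(i, w')} F_k(w'),
\end{equation*}
where the sum ranges over all pairs of an insertion position $i$ with $w_i \in \{k^+, k^-\}$ and a pre-image word $w'$ such that applying the insertion rule at position $i$ of $w'$ (with inserted letter $w_i$) yields exactly $w$. This identity holds because every generation sequence for $w$ has a unique final step, which determines the pair $(i, w')$ uniquely.

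The heart of the argument is then a bijection between the terms on the right-hand side and the recursive calls issued by the algorithm. Fix $i$ with $w_i = k^+$, the case $w_i = k^-$ being symmetric, and let $r$ be the smallest index $\ell > i$ with $w_\ell \in \{1^-, \ldots, k^-\}$, setting $r = n+1$ if no such index exists. A case analysis based on the decrement triggered by the insertion shows that each valid pre-image $w'$ falls into exactly one of three classes: (i) the no-decrement class, where $w'$ is $w$ with $w_i$ removed, which is consistent with the insertion rule precisely when $r = n+1$; (ii) the zero-target class, where some position $j$ with $i < j < r$ and $w_j = 0^-$ was originally $1^-$ in $w'$; (iii) the shift-target class, where the letter at position $r$ satisfies $w_r \in \{1^-, \ldots, (k-1)^-\}$ and was $(w_r + 1)^-$ in $w'$. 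These three classes correspond one-to-one with the algorithm's three branches: the ``delete $w_i$'' block, the inner loop over $j$ producing a $1^-$ at position $j$, and the ``increment $w_r$'' block, respectively.

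The main obstacle is verifying that the algorithm's guards on $r$ and $w_r$ (notably the tests ``$r = n+1$ or $w_r \neq k^-$'' and ``$r \leq n$'') correspond precisely to the feasibility conditions of the three classes, so that every valid pre-image contributes exactly once and no spurious pre-image is counted. The delicate boundary subcase is $w_r = k^-$, where the shift-target branch would require the nonexistent letter $(k+1)^-$; the guard structure must route enumeration into the remaining admissible classes without duplication. Once this branch-by-branch correspondence is established, the inductive hypothesis yields $\textrm{Multiplicity}(k, w') = F_k(w')$ for each recursive call, and summing over all valid $(i, w')$ pairs gives $\textrm{Multiplicity}(k, w) = F_k(w)$, completing the induction.
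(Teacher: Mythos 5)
Your proposal takes essentially the same route as the paper's own proof: reverse the last insertion, classify for each candidate position $i$ the possible preimages of $w$ (no decrement, a $0^{-}$ (resp.\ $0^{+}$) target strictly before $r$, or the letter at position $r$ itself when it is not $k^{-}$ (resp.\ $k^{+}$)), and sum the multiplicities of the preimages by induction on $|w|$. The branch-by-branch matching of these classes to the algorithm's guards, which you defer, is exactly the part the paper also leaves implicit, so your sketch is at least as detailed as the published argument.
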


\begin{proof} The idea of the algorithm is simple, in principle: we simply attempt to "reverse time" and try to identify strings $z$ that can yield the string $w$ in one step of the process. We then add the corresponding multiplicities of all preimages $z$. 

We go from $z$ to $w$ by inserting a $k^{+}$ or a $k^{-}$ and deleting one life from the closest non-zero letter of $z$ that has the correct polarity and is to the right of the newly inserted letter. 

Not all letters of $k^{+},k^{-}$ in $w$ can be candidates for the inserted letter, though: a candidate $k^{+}$, for instance, cannot have a $k^{+}$ as the first letter with positive polarity to its right. That is, if we group the $k^{+}$'s in $w$ in blocks of consecutive occurrences, then the candidate $k^{+}$'s can only be the right endpoints of  a block. 

As for the letter $l$ the new $k^{+}$ acted upon in $z$, in $w$ it cannot be a $k$; also, there cannot be any letters with positive polarity, other than zero, between the new $k^{+}$ and $l$. So $l$ is either one of the $0^{+}$'s at the right of the new $k^{+}$ or the first nonzero positive letter (if not $k^{+}$). It is also possible that $l$ does not exist.  

Analogous considerations apply if the newly inserted letter is a $k^{-}$. 
\end{proof}\qed

\section{Motivation: The Ulam-Hammersley problem for the heap decomposition of signed permutations} 

The \textit{Ulam-Hammesley problem} \cite{ulam1961monte,hammersley1972few} can be described as follows: estimate the asymptotic behavior of the expected length of the longest increasing subsequence of a random permutation $\sigma \in S_n$. The correct scaling is $E_{\sigma\in S_{n}}[LIS[\sigma]]=2\sqrt{n}(1+o(1))$, however substantially more is known, and the problem has deep connection with several mathematical concepts and areas, including Young tab-\\ leaux (see e.g.  \cite{romik2015surprising}), random matrix theory \cite{baik2016combinatorics} and interacting particle systems \cite{aldous1995hammersley}. 

The following concept has been defined (for $k=2$, and can be easily generalized as presented) in \cite{byers2011heapable}: a sequence of integers is \textit{($k$)-heapable} if the elements of a sequences can be inserted successively in a $k$-ary tree, min-heap ordered (i.e. the label of the child is at least as large as that of the parent), not-necessarily complete, so that insertions are always made as a leaf.  

Heapable sequences can be viewed as a (loose) generalization of increasing sequences: instead of inserting sequences into an increasing array, so that each node (except the last one) has exactly one successor, we allow a "more relaxed version" of this data structure, in the form of a $k$-ary tree with a min-heap ordering on the nodes. In other words a node has $k$-choices for the insertion, and using all these leaves can accommodate some limited form of disorder of consecutive elements, although values in such a sequence "tends to increase", simply because the available positions may tend to be lower and lower in the tree. 

 It is natural, therefore, to attempt to generalize the Ulam-Hammersley problem to heapable sequences. The direct extension is problematic, though: as discussed in \cite{byers2011heapable}, the problem of computing the longest heapable subsequence has unknown complexity (see \cite{chandrasekaran2020fixed,chandrasekaran2021maximum} for some related results).  A more promissing alternative is the following: by Dilworth's theorem the longest increasing subsequence is equal to the minimal number of \textit{decreasing sequences} in which we can partition the sequence. So the "correct" extension of the longest increasing subsequence is (see \cite{istrate2015heapable,balogh2017computing}): \textbf{decompose a random permutation $\sigma\in S_n$ into the minimal number of heap-ordered $k$-ary trees}, and study the scaling of the expected number of trees in such an optimal  decomposition. 

The longest increasing subsequence problem has also been studied for \textit{signed} or even \textit{colored} permutations \cite{borodin1999longest}. It is reasonable to consider a similar problem for heapability. This is what we do in this paper. Note, though, that the variant we consider here is not the same as the one in \cite{borodin1999longest}. Specifically, in \cite{borodin1999longest} the author requires that the "colors" (i.e. signs) of two adjacent nodes that are in a parent-child relationship are the same. By contrast we require that \textit{the colors of any two adjacent nodes are different.}

\begin{definition} 
A \textit{signed permutation of order $n$} is a pair $(\sigma,\tau)$, with $\sigma$ being a permutation of size $n$ and $\tau: [n]\rightarrow \{\pm 1\}$ being a \textit{sign function}. 
\end{definition}

\begin{definition}  Given integer $k\geq 1$, signed permutation $(\sigma,\tau)$ is called $\leq k$-\textit{heapable} if one can successively construct  $k$ (min) heap-ordered binary trees (not necessarily complete) $H_{0},H_{1},\ldots, H_{k-1}$ such that 
\begin{itemize} 
\item at time $i=0$ $H_{0},H_{1},\ldots H_{k-1}$ are all empty. 
\item for every $1\leq i\leq n$ element $\sigma(i)$ can be inserted as a new leaf in one of $H_{0},H_{1},\ldots,$ $ H_{k-1}$, such that if $\sigma(j)$, the parent of $\sigma(i)$ exists, $j<i$, then ($\sigma(j)<\sigma(i)$ and) $\tau[i]=-\tau[j]$. 
\end{itemize} 

$(\sigma,\tau)$ will be called \textit{$k$-heapable} iff $k$ is the smallest parameter such that $(\sigma,\tau)$ is $\leq k$-heapable. We will write \textit{heapable} instead of $1$-heapable. 
\end{definition} 

First we note that heapability of signed permutations is not a simple extension of heapability of ordinary permutations: 

\begin{observation} 
Heapability of permutation $\sigma$ is \textit{not} equivalent to heapability of any fixed signed-version of $\sigma$. In particular if $\tau_{0}[i]=1$ for all $i=1, \ldots, n$ then every signed permutation $(\sigma,\tau_{0})$ is $n$-heapable (even though $\sigma$ might be heapable as a permutation). This is because the sign condition forces all elements of $\sigma$ to start new heaps, as no two values have opposite signs. 
\end{observation}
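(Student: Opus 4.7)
The plan is to verify the two assertions in the observation separately: (a) the signed permutation $(\sigma,\tau_0)$ with $\tau_0\equiv +1$ is $n$-heapable, and (b) this is inconsistent with the unsigned heapability of some $\sigma$. Both parts reduce to reading off the alternating-sign condition $\tau[i]=-\tau[j]$ directly from the definition of $k$-heapability.

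For (a), I would first establish the lower bound: under $\tau_0\equiv+1$, no element $\sigma(i)$ can be inserted as a non-root leaf of any heap. Indeed, if $\sigma(i)$ were attached as a child of some $\sigma(j)$ with $j<i$, the definition would force $\tau_0[i]=-\tau_0[j]$, i.e.\ $+1=-1$, which is absurd. Consequently every $\sigma(i)$ must start a new heap, so at least $n$ heaps are required. For the matching upper bound I would place $\sigma(i)$ alone in heap $H_{i-1}$; each heap is a singleton with no parent-child relation to check, and the min-heap constraint is vacuously satisfied. Hence $(\sigma,\tau_0)$ is $\leq n$-heapable but not $\leq (n-1)$-heapable, which matches the definition of $n$-heapability.

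For (b), I would exhibit a concrete witness: the identity $\sigma(i)=i$ is $1$-heapable as an unsigned permutation, since $1<2<\cdots<n$ can be inserted successively along a single branch of a min-heap-ordered tree, each value becoming the child of its predecessor. Combined with (a), for $n\geq 2$ this gives a permutation that is $1$-heapable yet whose canonical signed version is $n$-heapable, proving the two notions genuinely differ.

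I anticipate no substantial obstacle: the whole argument is a direct unpacking of the definition. The only small point of care is to verify both the lower bound (forcing $n$ singletons) and the matching upper bound (trivially achieving $n$), so that $n$ is exhibited as the minimum rather than merely an upper bound on the number of heaps.
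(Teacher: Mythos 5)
Your proposal is correct and follows essentially the same reasoning as the paper: the alternating-sign condition $\tau[i]=-\tau[j]$ makes it impossible for any element to be attached as a child when all signs are $+1$, forcing $n$ singleton heaps. You merely make explicit the (trivial) matching upper bound and a concrete witness such as the identity permutation, which the paper leaves implicit in its one-sentence justification.
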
 

On the other hand a connection between ordinary heapability and that of signed permutations does exist after all: 

\begin{theorem} The following are true: 
\begin{itemize} 
\item If $(\sigma,\tau)$ is heapable (as a signed permutation) then $\sigma$ is heapable (as an ordinary permutation). 
\item There is a polynomial time algorithm that takes as input a permutation $\sigma\in S_n$ and produces a sign $\tau:[n]
\rightarrow \{\pm 1\}$ such that if $\sigma$ is heapable (as an ordinary permutation) then $(\sigma,\tau)$ is heapable (as a signed permutation). 
\end{itemize} 
\end{theorem}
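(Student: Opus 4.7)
The first claim is immediate from the definitions: if $T$ is a binary heap-ordered tree certifying signed heapability of $(\sigma,\tau)$, then every parent--child pair already satisfies $\sigma(j)<\sigma(i)$ and $T$ is binary, which are precisely the conditions for ordinary heapability. So the same $T$, with the sign constraint dropped, certifies that $\sigma$ is heapable as an unsigned permutation.

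For the second claim the plan has two steps. First, compute in polynomial time a single binary heap-ordered tree $T$ into which $\sigma$ can be inserted in arrival order. Since $\sigma$ is heapable such a $T$ exists, and the standard greedy rule from the Byers et al. line of work (at step $i$, attach $\sigma(i)$ as a new child of an available node---one with fewer than two children---whose label is the largest value still strictly smaller than $\sigma(i)$) is known to be optimal for binary heap decomposition, hence uses exactly one tree whenever $\sigma$ is heapable; stored as a balanced search tree of available parents keyed by label, this step runs in $O(n\log n)$ time. Second, set
\[
\tau(i)\;:=\;(-1)^{d_T(i)},
\]
where $d_T(i)$ is the depth in $T$ of the node where $\sigma(i)$ sits, with the root having depth $0$.

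Verification is one line: for any parent--child edge $(j,i)$ in $T$ we have $\sigma(j)<\sigma(i)$ by the heap property and $d_T(i)=d_T(j)+1$, so $\tau(i)=-\tau(j)$; thus $T$ together with the sign function $\tau$ witnesses signed heapability of $(\sigma,\tau)$. The only non-trivial step is the first one, and the main obstacle is to rely cleanly on the optimality of the greedy decomposition; if one prefers not to quote it, the same conclusion can be reached by a direct exchange argument that starts from an arbitrary witness tree for the heapability of $\sigma$ and locally rebalances it into the tree produced by the greedy rule, after which the bipartite depth-coloring of $T$ yields the desired $\tau$.
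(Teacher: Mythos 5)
Your proposal is correct and follows essentially the same route as the paper: the first claim is dismissed by dropping the sign constraint, and for the second you run the standard greedy single-heap construction for $\sigma$ and then assign signs alternating along parent--child edges, which is exactly the paper's $\tau(1)=1$, $\tau(i)=-\tau(p[i])$ rule (your depth-parity formula $\tau(i)=(-1)^{d_T(i)}$ is the same assignment). Both arguments rest on the same quoted fact that the greedy insertion rule correctly decides ordinary heapability, so no new gap is introduced.
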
 

\begin{proof} 

\begin{itemize} 
\item Trivial, since ordinary heapability does not care about signing restrictions. 
\item A simple consequence of the greedy algorithm for (ordinary) heapability \cite{byers2011heapable}: roughly speaking, given a permutation $\sigma$ there is a canonical way of attempting to construct a min-heap for $\sigma$: each element $\sigma(i)$ is added as a child of  the largest element $\sigma(j)<\sigma(i)$, $j<i$ that still has a free slot. If no such $\sigma(j)$ is available then $\sigma$ is not heapable. Further, denote by $p[i]$ the index of $\sigma(j)$ in the permutation $\sigma$ (i.e. $p[i]=j$). 

We use this algorithm to construct signing $\tau$ inductively. Specifically, set $\tau(1)=1$. Given that we have constructed $\tau(j)$ for all $j<i$, define $\tau(i)=-\tau(p[i])$. Then $(\sigma,\tau)$ is heapable as a signed permutation, since the greedy solution for $\sigma$ is legal for $(\sigma,\tau)$. 
\end{itemize} 
\end{proof}\qed

\subsection{A greedy algorithm for the optimal heap decomposition of signed permutations} 

We now give a greedy algorithm for decomposing signed permutations into a minimal number of heap-ordered $k$-ary binary trees. The algorithm is presented in Figure~\ref{fig-alg-greedy} and is based on the concept of \textit{slots}. The algorithm ends by creating an forest of heaps based on the initial signed integer sequence. For each number $x_i$ the algorithm at step $i$ even will add it to an existing heap or will start a new heap with it. When it can be added on an existing heap the sign of it is very important. We search for the highest value less than $x_i$ \textrm{having opposite sign to $\sigma(i)$}, and will be created $k$ \textit{slots of value $\sigma(i)$}, allowing the subsequent insertion of values in the interval $[\sigma(i), \infty)$. 

\begin{example}
To better explain this concept, consider the sequence of insertions in a heap-ordered tree for a permutation $(\sigma,\tau)$ with initial prefix $\sigma(1)=1,\tau(1)=-1$, $\sigma(2)=8, \tau(2)=+1$, $\sigma(3)=5, \tau(3)=-1$, or shortly $\sigma=\{1, 8, 5\}$ and $\tau=\{-, +, -\}$ and $k=2$.  There is essentially an unique way (displayed in Figure~\ref{ex:NodesAndSlots}) to insert these elements successively into a single  heap: $\sigma(1)$ at the root, $\sigma(2)$ as a child of $\sigma(1)$, $\sigma(3)$ as a child of $\sigma(2)$. Note that $\sigma(3)$ needs to be a child of $\sigma(2)$ since the condition $\tau(3)=-1$ forbids, e.g. placing $\sigma(3)$ as a child of $\sigma(1)$. 
\label{expl-trees} 
 \end{example} 
 
 \begin{figure}[ht]
 \begin{center} 
    \tikzset{
      basic/.style  = {draw, text width=1em, drop shadow,  circle},
      nod/.style   = {basic, thin, align=center, fill=gray!45},
      slot/.style = {rectangle,thin,align=center, fill=gray!60}
    }
    \begin{minipage}{.3\textwidth}
    \begin{tikzpicture}[
     level 1/.style={sibling distance=20mm},
      edge from parent/.style={->,draw},
      level 2/.style={sibling distance=20mm},
      >=latex]
     root of the the initial tree, level 1
    \node[nod] {$1$}
      child {node[slot] (c4) {$[1,\infty)^+$}}
      child {node[slot] (c2) {$[1,\infty)^+$}}
   ; 
    \end{tikzpicture}
    \end{minipage}  
   \begin{minipage}{.3\textwidth}
   \begin{tikzpicture}[,
      level 1/.style={sibling distance=20mm},
      edge from parent/.style={->,draw},
      level 2/.style={sibling distance=20mm},
      >=latex]
    \node[nod] {$1$}
      child {node[nod] (c6) {$8$}
      			child {node[slot] (c81) {$[8,\infty]^-$}}
				child {node[slot] (c82) {$[8,\infty]^-$}}
			}
	child {node[slot] (c10) {$[1,\infty)^+$}}	
    ;
    \end{tikzpicture}
    \end{minipage} 
   \begin{minipage}{.3\textwidth}
   \begin{tikzpicture}[,
      level 1/.style={sibling distance=20mm},
      edge from parent/.style={->,draw},
      level 2/.style={sibling distance=20mm},
      >=latex]
    \node[nod] {$1$}
      child {node[nod] (c6) {$8$}
      			child {
      				node[nod] {$15$}
      				child {node[slot] (c4) {$[15,\infty]^+$}}
      				child {node[slot] (c2) {$[15,\infty]^+$}}
      			}
				child {node[slot] (c82) {$[8,\infty]^-$}}
			}
	child {node[slot] (c10) {$[1,\infty)^+$}}	
    ;
    \end{tikzpicture}
    \end{minipage} 
    \end{center} 
    \caption{Nodes and slots.}
    \label{ex:NodesAndSlots}
  \end{figure}
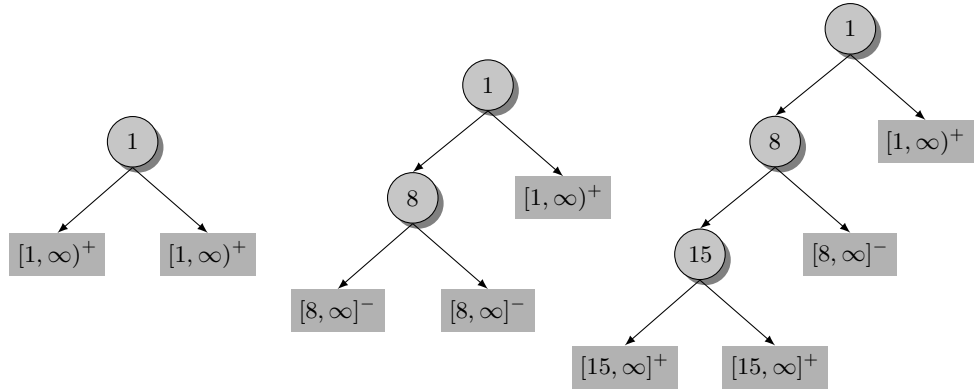

Each insertion creates two \textit{slots}, having an interval as values. Intuitively the interval of a slot describes the constraints imposed on an integer to be inserted in that position in order to satisfy the heap constraint. For example, after inserting $\sigma(2)$ the unique free slot of the root has value $1$, since  all children of the root must have values bigger than $\sigma(1)=1$. The unique free slot of node hosting $\sigma(2)$ has value $8$, since any descendant of this node must have minimum value $8$. 

Our main result of this section is: 

\begin{theorem} 
The algorithm GREEDY, presented in Fig.~\ref{fig-alg-greedy}, decides the heapability of an arbitrary signed permutation and, more generally, constructs an optimal heap decomposition of $(\sigma,\tau)$. 
\label{sign-decomposition} 
\end{theorem}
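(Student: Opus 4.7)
The plan is to split the proof of Theorem~\ref{sign-decomposition} into two parts: first verifying that GREEDY always produces a valid forest of sign-respecting heap-ordered $k$-ary trees, and then showing by an exchange argument that the number of heaps it opens is no larger than that of any optimal decomposition of $(\sigma,\tau)$. The correctness half is essentially a bookkeeping invariant on the slot data structure, while the optimality half is where the real work lies.

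For correctness, I would maintain the invariant that at every step the collection of slots produced by GREEDY corresponds to a partial sign-respecting heap forest: each slot is attached to a unique unfilled child position of some node, carries that node's value as its lower bound, and carries the polarity opposite to that of the node. Inserting $\sigma(i)$ into a slot of sign $-\tau(i)$ and value at most $\sigma(i)$ preserves this invariant, as does opening a new heap when no such slot exists. Hence the forest GREEDY maintains is always legal.

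For optimality I would argue by induction on $i$. Fix any optimal decomposition OPT. For each polarity $\epsilon\in\{+,-\}$, let $G_i^{\epsilon}$ and $O_i^{\epsilon}$ denote the multisets of available slot values of sign $\epsilon$ maintained respectively by GREEDY and OPT after processing $\sigma(1),\ldots,\sigma(i)$. The inductive invariant I would establish is twofold: the number of heaps currently opened by GREEDY is at most that opened by OPT, and for each $\epsilon$ there exists an injection $\phi_i^{\epsilon}\colon O_i^{\epsilon}\to G_i^{\epsilon}$ with $\phi_i^{\epsilon}(v)\leq v$ for every $v$. Informally, GREEDY's slots of each polarity are pointwise at least as small (hence at least as permissive) as those of OPT, which is exactly the property needed to let GREEDY simulate the next move of OPT. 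The inductive step splits into cases according to whether OPT inserts $\sigma(i+1)$ into an existing slot or opens a new heap; each case is handled by locally rearranging $\phi_i^{\epsilon}$ into $\phi_{i+1}^{\epsilon}$, using crucially that GREEDY selects, among sign-compatible slots $s$ with $s\leq\sigma(i+1)$, the one of largest value (so the \emph{smaller} GREEDY slots are preserved for future smaller arrivals).

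The main obstacle I expect is the interaction between the polarity stratification and the opening of a new heap: a freshly opened heap contributes $k$ slots of a fixed polarity (namely $-\tau(i+1)$) and value $\sigma(i+1)$, so opening a heap perturbs $G^{\epsilon}$ and $O^{\epsilon}$ asymmetrically across the two polarities. Verifying that $\phi_{i+1}$ can still be constructed in this case, and in particular showing that GREEDY never opens a new heap at step $i+1$ unless OPT is also forced to open one by then, is the delicate point. Here one uses the contrapositive: if GREEDY opens a new heap, then $G_i^{-\tau(i+1)}$ contains no slot of value $\leq \sigma(i+1)$, and the pointwise inequality $\phi_i^{-\tau(i+1)}(v)\leq v$ then forces $O_i^{-\tau(i+1)}$ to contain no such slot either, so OPT must also open a new heap. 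Carrying the invariant through to $i=n$ yields that GREEDY uses at most as many heaps as OPT, proving optimality and therefore the theorem.
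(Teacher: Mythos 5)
Your overall strategy (greedy-stays-ahead via a per-polarity comparison of the free-slot multisets of GREEDY and of a fixed optimum) is the same as the paper's, but your comparison points the wrong way, and this is a genuine gap rather than a notational slip. You posit an injection $\phi_i^{\epsilon}\colon O_i^{\epsilon}\to G_i^{\epsilon}$ with $\phi_i^{\epsilon}(v)\le v$, which in particular forces $|O_i^{\epsilon}|\le|G_i^{\epsilon}|$. But GREEDY, precisely because it fills slots more aggressively, ends up with \emph{fewer} free slots, not more: the number of free slots of polarity $\epsilon$ after $i$ steps equals $k\cdot\#\{j\le i:\tau(j)=-\epsilon\}$ minus the number of sign-$\epsilon$ elements so far placed into slots, so the better algorithm is the one with the smaller slot multisets. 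Concretely, take $k=1$, $\sigma=(1,2,3)$, $\tau=(+,-,-)$. GREEDY puts $2$ under $1$, and after two steps its only free slot is the positive-accepting slot of value $2$, so $G_2^{-}=\emptyset$. The decomposition $\{1\text{ with child }3\}$, $\{2\}$ is optimal (two heaps is forced, since $2$ and $3$ compete for the single child slot of $1$), and after two steps it has $O_2^{-}=\{1\}$, $O_2^{+}=\{2\}$. No injection $O_2^{-}\to G_2^{-}$ exists, so your invariant is not maintainable. The same example kills the mechanism you rely on in the "delicate point": at step $3$ GREEDY opens a new heap while this optimum does not open one at that step (it paid earlier, at step $2$), so the per-step implication "GREEDY opens $\Rightarrow$ OPT opens" cannot be the engine of the induction. (There is also a polarity mix-up: the slots relevant to inserting $\sigma(i+1)$ are those accepting sign $\tau(i+1)$, i.e.\ $G_i^{\tau(i+1)}$ in your labelling, not $G_i^{-\tau(i+1)}$.)

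The paper's invariant is the reverse domination: for each polarity, GREEDY's sorted vector of free slot values is at most as long as the adversary's and is pointwise $\le$ it on the common prefix (Lemma~\ref{dom}), and maintaining this under one more insertion is where the technical work sits (it is reduced, via the signature multisets, to Hammersley-insertion statements, Lemmas~\ref{dom-1} and~\ref{dom-2}). With the inequality in that direction, optimality follows from the cardinality part alone, since the number of heaps equals the total number of free slots minus $n(k-1)$; one never needs the false step-by-step forcing claim. Your correctness half (that GREEDY's slot bookkeeping always encodes a legal sign-respecting heap forest) is fine, but the optimality half needs the invariant flipped and its preservation proved along the lines of Lemmas~\ref{dom}--\ref{dom-2}.
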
 

\begin{figure} 
\begin{center}
\begin{pseudocode}[doublebox]{GREEDY}{\sigma,\tau}
\mbox{INPUT } \sigma=(\sigma(1),\sigma(2),\ldots, \sigma(n))\mbox{ a permutation in }S_n\mbox{ and} \\
\hspace{11mm}\tau=(\tau(1),\tau(2),\ldots, \tau(n))\mbox{ a list of \{+,-\} signs} \\
\\
\mbox{start with empty heap forest }T=\emptyset. \\
\\
\mbox{for }i\mbox{ in range(n):} \\
	\hspace{5mm}\IF \mbox{(there exists a slot where }\sigma(i)\mbox{ can be inserted, according to } \tau(i) \mbox{):}\\
	\hspace{20mm}\mbox{ insert }\sigma(i)\mbox{ in the slot with the largest compatible value }
	\hspace{20mm} \ELSE :\\
	\hspace{20mm}\mbox{ start a new heap consisting of }\sigma(i)\mbox{ only.} 
\end{pseudocode}
\end{center}
\caption{The greedy algorithm for decomposing a signed permutation into a forest of heaps.}
\label{fig-alg-greedy}
\end{figure} 
\begin{proof} 

Proving  correctness of the algorithm employs the following 

\begin{definition} 
Given a binary heap forest $T$, define the \textit{positive signature of $T$} denoted $sig^{+}(T)$, to be the vector containing the (values of) free slots with positive polarity in $T$, sorted in non-decreasing order. The negative signature of $T$, denoted by $sig^{-}(T)$, is defined analogously. 

Given two binary heap forests $T_{1},T_{2}$, {\it $T_{1}$ dominates $T_{2}$} if 
\begin{itemize} 
\item $|sig^{+}_{T_{1}}|\leq |sig^{+}_{T_{2}}|$ and inequality $sig^{+}_{T_{1}}[i]\leq sig^{+}_{T_{2}}[i]$ holds for all $1\leq i \leq |sig^{+}_{T_{1}}|$.
\item Similarly for negative signatures. 
\end{itemize} 
\end{definition}

\begin{lemma} 
Let $T_{1},T_{2}$ be two heap-order forests  such that $T_{1}$ dominates $T_{2}$. Insert a new element $x$ greedily in  $T_{1}$ (i.e. at the largest slot with value less or equal to $x$, or as the root of a new tree, if no such slot exists). Also insert $x$ into an arbitrary compatible slot in $T_{2}$. These two insertions yield heap-ordered forests $T_{1}^{\prime},T_{2}^{\prime}$, respectively. 
Then $T_{1}^{\prime}$ dominates $T_{2}^{\prime}$.
\label{dom}
\end{lemma}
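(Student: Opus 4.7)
The plan is an exchange-style case analysis on the polarity of the inserted element $x$ (take $x$ positive; the negative case is symmetric by swapping the roles of $sig^+$ and $sig^-$) together with the two possible behaviors of each of the two insertion procedures---using an existing compatible slot versus opening a new heap. Since a positive insertion consumes at most one positive slot and creates $d$ new negative slots all of value $x$, the positive and negative signatures evolve essentially independently, and I would handle them in turn.

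For the positive signatures, let $sig^+(T_1)=(a_1,\ldots,a_{m_1})$ and $sig^+(T_2)=(b_1,\ldots,b_{m_2})$, both sorted non-decreasingly, so that the hypothesis reads $m_1\le m_2$ and $a_i\le b_i$ for $i\le m_1$. The greedy rule picks the largest index $i^*$ with $a_{i^*}\le x$ (or else opens a new heap); if the arbitrary procedure in $T_2$ uses a slot $b_j\le x$, the key structural observation is that $j\le i^*$. Indeed, for $j\le m_1$ this follows from $a_j\le b_j\le x$ combined with the maximality of $i^*$; the only way $j>m_1$ can arise is when $i^*=m_1$, in which case removal drops the length by one on each side and a clean index shift completes the pointwise comparison. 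A routine case split on the location of the comparison index relative to $i^*$ and $j$ then establishes pointwise domination of the shortened signatures. The subcases where either algorithm opens a new heap fall out from the same inequalities: if $T_1$ has no compatible positive slot but $T_2$ does, then necessarily $m_1=0$, which makes the positive-signature constraint after the insertion trivial.

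For the negative signatures, both $T_1'$ and $T_2'$ gain exactly $d$ copies of the value $x$ appended to their negative-slot lists. I would reduce to the single-element subclaim and prove: if $A\preceq B$ in the paper's sense and $y$ is any value, then the sorted lists $A\cup\{y\}$ and $B\cup\{y\}$ again satisfy $A'\preceq B'$. My plan is to track the rank of $y$ in each of the two sorted lists, using the contrapositive chain ``$b_i<y \Rightarrow a_i<y$'' (which follows from $a_i\le b_i$) to show that the shift in ranks is consistent with the pointwise domination; iterating $d$ times handles the full negative-signature update.

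The principal obstacle I anticipate lies in the mismatched scenario where the greedy rule opens a new heap in $T_1$ while the arbitrary insertion in $T_2$ uses an existing compatible slot: here the positive signatures change asymmetrically (one is untouched, the other shrinks by one) while both negative signatures grow by $d$ copies of $x$. Making the negative-signature comparison work in this case requires carefully combining the structural restriction that triggers it (essentially $m_1=0$) with the way the $d$ copies of $x$ merge into the two sorted negative lists; the slack $|sig^-(T_2)|-|sig^-(T_1)|$ supplied by the hypothesis is precisely what must absorb the asymmetric update and preserve the pointwise invariant.
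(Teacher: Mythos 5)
Your overall route is genuinely different from the paper's. The paper does not re-derive the effect of the consumption step: it simulates the signed greedy insertion by a (greedy, resp.\ arbitrary) Hammersley insertion on one signature followed by transferring the $k$ copies of $x$ to the other signature, and then invokes the unsigned-case result quoted as Lemma~\ref{dom-1} (cited from earlier work, not reproved) together with the auxiliary Lemma~\ref{dom-2} stating that adding or deleting $k$ common copies of a value preserves domination. You instead argue directly and self-containedly: your index analysis of the slot removal on the consumed polarity (the observation $j\le i^*$, the index-shift comparison, and the fact that the mismatched case forces $m_1=0$) is correct and effectively reproves the content of Lemma~\ref{dom-1} in this setting.

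The gap is in the other half, and it is a real one. The subclaim you reduce to --- if $A\preceq B$ in the paper's domination order ($|A|\le|B|$ and $A[i]\le B[i]$ for $i\le|A|$) then $A\cup\{y\}\preceq B\cup\{y\}$ --- is false when $|A|<|B|$ and $y$ exceeds every element of $A$: take $A=(1)$, $B=(2,3)$, $y=5$; then $(1,5)\not\preceq(2,3,5)$, since $5>3$ at index $2$. Your rank-tracking argument only exploits $a_i\le b_i$ for $i\le|A|$, so it controls the number of elements of $B$ below $y$ among the first $|A|$ positions; the failure is caused precisely by elements of $B$ beyond position $|A|$ that are smaller than $y$. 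The configuration is even realizable by forests satisfying the hypothesis of Lemma~\ref{dom} (take $k=1$, $T_1$ with roots $1^-$ and $4^+$, $T_2$ with roots $2^-$, $3^-$, $4^+$, and next element $5$ with negative sign, inserted into the slot of value $4$ on both sides), so any complete proof must either strengthen the invariant (e.g.\ compare, for every threshold $t$, the numbers of free slots of value at most $t$) or use structure not stated in the lemma, such as the two forests arising from a common insertion history. In fairness, the paper's own Lemma~\ref{dom-2} has the same blind spot (its claim $i_1\ge i_2$ breaks exactly when $x$ is larger than all of $A_1$ but not than all of $A_2$), so your difficulty mirrors the paper's; but as written your step does not go through. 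Finally, your closing worry is misdirected: in the mismatched case the negative signatures of the two forests change identically (each gains $k$ copies of $x$, whether $x$ filled a slot or became a new root), so there is no asymmetric negative update to absorb --- the only genuine difficulty in that half is the length-mismatch phenomenon just described.
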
 
\begin{proof} 

We need the following definition 

\begin{definition}
Given integer $k\geq 1$, a \textit{$k$-multiset} is a multiset $A$ such that each element of $A$ has multiplicitly at most $k$, that is a function $f:A\rightarrow \{0,1,\ldots, k\}$. 
\end{definition} 

\begin{definition} Given multiset $A$, a \textit{Hammersley insertion of an element $x$ into $A$} is the following process: 
\begin{itemize} 
\item $x$ is given multiplicity $k$ in $A$. 
\item Some element $y\in A$, $y>A$ (if any exists), has its multiplicity reduced by 1. 
\end{itemize} 
It is a \textit{greedy Hammersley insertion} if (when it exists) $y$ is the smallest element of $A$ larger than $x$ having positive multiplicity. 
\end{definition} 

\begin{definition} Given two $k$-multisets $A_1$ and $A_2$ of elements of $\Gamma_{k}$, we define $A_{i}^{+}, A_{i}^{-}$ the submultisets of $A_i$, $i=1,2$ consisting of elements with positive (negative) polarity only. 

We say that $A_1$ dominates $A_2$ iff: 
\begin{itemize} 
\item $|A^{+}_{1}|\leq |A^{+}_{2}|$ and inequality $A^{+}_{1}[i]\leq A^{+}_{2}[i]$ holds for all $1\leq i \leq |A^{+}_{1}|$.
\item Similarly for negative polarities.  
\end{itemize} 

\end{definition} 

The following result was not explicitly stated in \cite{istrate2015heapable} but was implicitly proved, as the basis of the proof of Lemma 1 from \cite{istrate2015heapable} (the analog of Lemma~\ref{dom} for the unsigned case): 

\begin{lemma} If $A_1$ and $A_2$ are multisets of integers such that $A_1$ dominates $A_2$. Consider $x$ an element not present in either set and let $A_{1}^{\prime}$, 
$A_{2}^{\prime}$ be the result of greedy Hammersley insertion into $A_1$ and an arbitrary Hammersley insertion into $A_2$. 

Then $A_{1}^{\prime}$ dominates $A_{2}^{\prime}$. 
\label{dom-1}
\end{lemma} 

Rather than repeating the proof, we refer the reader to \cite{istrate2015heapable}. 
To be able to prove Lemma~\ref{dom} we also need the following: 

\begin{lemma} The following are true: 
\begin{itemize} 
\item[-] Let $A_1$ and $A_2$ be $k$-multisets of integers and let $x$ be an integer that does not appear in $A_1,A_2$. Then, if $A_{1}^{\prime},A_{2}^{\prime}$ represent the result of inserting $x$ with multiplicity $k$ in $A_1,A_{2}$, respectively, then $A_{1}^{\prime}$ dominates $A_{2}^{\prime}$. 
\item Let $A_1$ and $A_2$ be $k$-multisets of integers and let $x$ be an integer that appears in both $A_1,A_2$ with the same multiplicity. Then, if $A_{1}^{\prime},A_{2}^{\prime}$ represent the result of deleting $x$ from $A_1,A_{2}$, respectively, then $A_{1}^{\prime}$ dominates $A_{2}^{\prime}$.
\end{itemize}
\label{dom-2} 
\end{lemma} 
\begin{proof} 

\begin{itemize} 
\item[-] It is clear that $x$ adds $k$ to the cardinality of both $A_1,A_2$. To prove domination we have to, therefore, prove that $A_{1}^{\prime}[i]\leq A_{2}^{\prime}[i]$ for all $1\leq i\leq |A_{1}^{\prime}|$. Let $i_1$ be the smallest index such that $A_{1}(i_1)> x$ and $i_2$ be the smallest index such that $A_{2}(i_2)> x$. Because $A_1$ dominates $A_{2}$, $i_1\geq i_2$. 
\begin{itemize} 
\item{Case 1: $i< i_2$.} Then $A_{1}^{\prime}[i]=A_{1}[i]$ and $A_{2}^{\prime}[i]=A_{2}[i]$. Inequality follows from the fact that $A_1$ dominates $A_2$. 
\item{Case 2: $i\geq i_1+k$.} Then $A_{1}^{\prime}[i]=A_{1}[i-k]$ and $A_{2}^{\prime}[i]=A_{2}[i-k]$. Inequality follows from the fact that $A_1$ dominates $A_2$.
\item{Case 3: $i_2\leq i<i_1$:} Then $A_{1}^{\prime}[i]=A_{1}[i]<x$ and $A_{2}^{\prime}[i]=x$. So $A_{1}^{\prime}[i]\leq A_{2}^{\prime}[i]$. 
\item{Case 4: $i_1\leq i<i_1+k$:} Then $A_{1}^{\prime}[i]=x$ and $A_{2}^{\prime}[i]\geq A_{2}^{\prime}[i_1]\geq A_{2}^{\prime}[i_2]=x$. So $A_{1}^{\prime}[i]\leq A_{2}^{\prime}[i]$. 
\end{itemize} 
\item[-] It is clear that $x$ subtracts $k$ to the cardinality of both $A_1,A_2$. To prove domination we have to, therefore, prove that $A_{1}^{\prime}[i]\leq A_{2}^{\prime}[i]$ for all $1\leq i\leq |A_{1}^{\prime}|$. Let $i_1$ be the smallest index such that $A_{1}(i_1+k)> x$ and $i_2$ be the smallest index such that $A_{2}(i_2+k)> x$. $i_1,i_2$ are well-defined since $A_1,A_2$ contain $k$ copies of $x$. Because $A_1$ dominates $A_{2}$, $i_1\geq i_2$. The first position of $x$ in $A_1$ is $i_1$, the last is $i_1+k-1$; the first position of $x$ in $A_2$ is $i_2$, the last is $i_2+k-1$. 
\begin{itemize} 
\item{Case 1: $i< i_2$.} Then $A_{1}^{\prime}[i]=A_{1}[i]$ and $A_{2}^{\prime}[i]=A_{2}[i]$. Inequality follows from the fact that $A_1$ dominates $A_2$. 
\item{Case 2: $i\geq i_1$.} Then $A_{1}^{\prime}[i]=A_{1}[i-k]$ and $A_{2}^{\prime}[i]=A_{2}[i-k]$. Inequality follows from the fact that $A_1$ dominates $A_2$.
\item{Case 3: $i_2\leq i<i_1$:} Then $A_{1}^{\prime}[i]=A_{1}[i]<x$ and $A_{2}^{\prime}[i]=A_{2}[i+k]>x$. So $A_{1}^{\prime}[i]\leq A_{2}^{\prime}[i]$. 
\item{Case 4: $i_1\leq i<i_1+k$:} Then $A_{1}^{\prime}[i]<x$ and $A_{2}^{\prime}[i]\geq A_{2}^{\prime}[i_1]\geq A_{2}^{\prime}[i_2]=A_{2}[i_2+k]>x$. So $A_{1}^{\prime}[i]\leq A_{2}^{\prime}[i]$. 
\end{itemize} 
\end{itemize} 
\end{proof}\qed

Now the proof of Lemma~\ref{dom} follows: the effect of inserting an element with positive polarity $x^{+}$ greedily into $T_1$ can be simulated as follows: 
\begin{enumerate} 
\item[-] perform a greedy Hammersley insertion of $x^-$ into $sig^{-}(T_1)$. 
\item[-] remove $x^-$ from $sig^{-}(T_1)$, and insert $x^+$ into $sig^{+}(T_1)$. 
\end{enumerate} 
On the other hand we can insert $x^{+}$ into $T_2$ as follows: 
\begin{enumerate} 
\item[-] perform a Hammersley insertion of $x^-$ into $sig^{-}(T_2)$. 
\item[-] remove $x^-$ from $sig^{-}(T_2)$, and insert $x^+$ into $sig^{+}(T_2)$. 
\end{enumerate} 
By applying Lemma~\ref{dom-1} and~\ref{dom-2} we infer that after the insertion of $x^{+}$ $T_{1}^{\prime}$ dominates $T^{\prime}_{2}$. The insertion of an element with negative polarity is similar. 
\end{proof} 
\qed 

Using Lemma~\ref{dom} we can complete the proof of Theorem~\ref{sign-decomposition} as follows: 
by domination, whenever no slot of $T_{1}$ can accommodate $x$ (which, thus, starts a new tree) then the same thing happens in $T_{2}$ (and thus $x$ starts a new tree in $T_{2}$ as well). So the greedy algorithm is at least as good as any sequence of insertions, which means it is optimal. 
\qed

\end{proof}\qed 

\subsection{Connection with the Signed Hammersley process}

We are now in a position to explain what role does the signed Hammersley process play in the Ulam-Hammersley problem for the heap decomposition of signed permutations: to each heap-ordered forest $F$ associate a word $w_{F}$ over $\Gamma_{k}^{*}$ which describes the relative positions of slots in $F$. Specifically, sort the leaves of $F$ according to  their value. If leaf $f$ has at a certain moment $p\leq k$ slots of, say, negative polarity, then encode this into $w_{F}$ by letter $p^{-}$.

\begin{example} Let us consider the trees of Example~\ref{expl-trees}. For the first tree the associated word is $2^{+}$. For the second tree the word is $1^{+}2^{-}$. For the third tree the word is $1^{+}1^{-}2^{+}$. 
\end{example} 

\begin{observation} 
Note that \textrm{when inserting a new element into the heap forest using the greedy algorithm the associated word changes exactly according to the signed Hammersley process\footnote{or, rather, the signed Hammersley process that removes a one to the left, rather than to the right. We would have to use min-heaps to obtain the signed Hammersley process. But this change is inconsequential.}}. 
In fact we can say more: starting a new heap-ordered tree corresponds to moments when we insert a new $k^{+}$ or $k^{-}$ (whichever is appropriate at the given moment) without subtracting any 1 from the current word.  
So, if $z$ is the outcome of a sequence of greedy insertions then $trees(z)$, the number of heap-ordered $k$-ary trees created in the process  is equal to $\lambda^{+}+\lambda^{-}$ (in the notation of equations~\ref{ineq-3} and~\ref{ineq-4}), and is equal to 
\[
trees_{k}(z)=|z|_{k}-\sum\limits_{i=1}^{k} i\cdot |z|_{k-i}+\sum\limits_{i=1}^{k}|z|_{k-i}=|z|_{k}-\sum\limits_{i=1}^{k}(i-1)|z|_{k-i}
\]
Of course, each word $z$ may arise with a different multiplicity in the signed Hammersley process. So to compute the expected number of heap-ordered $k$-ary trees generated by using the greedy algorithm with a random signed permutation of length $n$ as input we have to compute amount
\begin{equation} 
Z_{n}^{k}:=\frac{1}{2^{n}\cdot n!}\sum_{z\in (\Gamma^{k})^{n}} F_{k}(z)\cdot trees_{k}(z).
\label{scaling-trees}  
\end{equation} 
\label{obs-final} 
\end{observation}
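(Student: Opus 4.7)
The observation bundles together three interdependent claims: a step-by-step correspondence between greedy insertions and Hammersley transitions on $w_F$, an explicit formula for $trees_k(z)$, and the averaging identity~(\ref{scaling-trees}). The plan is to establish them in that order, since the latter two rely on the first.

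To prove the structural correspondence I would reason locally on a single greedy insertion. Fix a heap forest $F$ with associated word $w_F$ obtained by listing the nodes with free slots in increasing order of value, each contributing a letter $p^{+}$ or $p^{-}$ according to the polarity that its free slots demand (opposite to the sign of the node itself, which is forced by the parent-child sign constraint). Suppose we greedily insert an element of positive sign $\sigma(i)^{+}$; its parent must be a $-$-signed node, so the greedy rule selects the rightmost letter of the form $p^{+}$ with $p\geq 1$ whose value is at most $\sigma(i)$. This is precisely the closest $+$-polarity letter of nonzero count lying to the left of the position where $\sigma(i)$ ranks by value. Inserting $\sigma(i)$ decrements that letter's count by one and introduces a new letter $k^{-}$ (the new $+$-signed node carries $-$-polarity slots) at the position matching $\sigma(i)$'s rank, exactly mirroring the signed Hammersley rule for inserting $k^{-}$ (with left/right reflected, as acknowledged in the footnote). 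The negative-signed case is symmetric, and when no compatible slot exists the greedy algorithm starts a fresh tree and the word gains a $k^{\pm}$ letter with no decrement, which is a pure Hammersley insertion.

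The formula for $trees_k(z)$ then reduces to an algebraic exercise. A new tree is created exactly at each pure insertion, and those are counted by the parameters $\lambda^{+}$ and $\lambda^{-}$ of equations~(\ref{ineq-3}) and~(\ref{ineq-4}), so $trees_k(z) = \lambda^{+}+\lambda^{-}$. Summing those two equations and collapsing $|z|_{(k-i)^{+}}+|z|_{(k-i)^{-}}$ to $|z|_{k-i}$ yields
\[
|z|_{k} \;=\; \lambda^{+}+\lambda^{-} \;+\; \sum_{i\geq 1}(i-1)\cdot |z|_{k-i},
\]
which rearranges to the stated formula. Finally, identity~(\ref{scaling-trees}) follows by partitioning the $2^{n}\cdot n!$ uniformly distributed signed permutations of length $n$ according to the word $z$ they produce under greedy insertion: the correspondence established above shows there are exactly $F_k(z)$ such permutations, and Theorem~\ref{sign-decomposition} guarantees each contributes $trees_k(z)$ heaps to its \emph{optimal} decomposition.

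The main obstacle I foresee is making the correspondence in the second paragraph fully watertight. One must handle carefully the case where a node's free-slot count drops to zero (its letter disappears from $w_F$) and verify that the positional encoding is preserved as new nodes of intermediate value are interleaved among older ones of both polarities. Once the bijection between sequences of greedy insertions and Hammersley derivations is pinned down, the remaining parts are straightforward bookkeeping.
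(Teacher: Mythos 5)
Your overall route is the intended one: the paper states Observation~\ref{obs-final} without a separate proof, and your three steps (local analysis of a single greedy insertion, summing equations~(\ref{ineq-3}) and~(\ref{ineq-4}) to get $trees_{k}(z)=\lambda^{+}+\lambda^{-}=|z|_{k}-\sum_{i\geq 1}(i-1)|z|_{k-i}$, and the rank-plus-sign correspondence between signed permutations and insertion sequences that yields~(\ref{scaling-trees}), with Theorem~\ref{sign-decomposition} supplying optimality of the greedy output) are exactly the computations the observation rests on. The left/right mirroring is also fine as you say: $trees_{k}$ depends only on the Parikh vector, so the reflection is genuinely inconsequential for~(\ref{scaling-trees}).

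There is, however, one concrete flaw: your treatment of saturated nodes. You build $w_F$ by ``listing the nodes with free slots'' and you assert that when a node's free-slot count drops to zero ``its letter disappears from $w_F$,'' flagging this as the main obstacle. Under the paper's encoding nothing disappears: a node all of whose $k$ slots are occupied contributes the letter $0^{+}$ or $0^{-}$ (polarity opposite to the node's sign), so $w_F$ always has length equal to the number of elements inserted so far, exactly as in $H_{k}^{sign}$, where letters are never deleted and $0^{\pm}$ letters are inert. With your disappearing-letter convention the word process is no longer $H_{k}^{sign}$, $F_{k}(z)$ no longer counts the signed permutations producing $z$, and the formula for $trees_{k}(z)$ fails for $k\geq 2$, because the $i=k$ term contributes $(k-1)|z|_{0}$. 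For instance, with $k=2$, inserting $2^{+}$ and then two killing $2^{-}$'s produces the word $2^{-}2^{-}0^{+}$ with one tree, and indeed $|z|_{2}-1\cdot|z|_{0}=2-1=1$, whereas dropping the $0^{+}$ would give $2$. The correct resolution of your ``obstacle'' is therefore simply to keep the zero letters (your second worry, about interleaving new values among old ones, is harmless since the word is maintained in value-sorted order and relative positions of old letters never change); once this is fixed, the rest of your argument goes through as written.
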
  

The last equation in Observation~\ref{obs-final} is what motivated us to give Algorithm~\ref{algo} for computing the formal power series $F_{k}$. We defer, however, the problem of experimentally investigating the scaling behavior of $Z_{n}^{k}$ as $n\rightarrow \infty$ using Algorithm~\ref{algo} to subsequent work. 

\section{Conclusions} 

The main contribution of this paper is to show that a significant number of  analytical tools developed for the analysis of the Ulam-Hammersley problem for heapable sequences \cite{istrate2015heapable,hammersley-rochian} extend to the case of signed permutations. 
While going along natural lines, the extension has some moderately interesting features: for instance the languages in the sign case appear to have slightly higher grammatical complexity than those in \cite{hammersley-rochian} for the ordinary process.  

The obvious continuation of our work is the investigation of the Ulam-Hammersley problem for signed (and, more generally, colored) permutations. We do not mean only the kind of experiments alluded to in Observation~\ref{obs-final}. For instance it is known that the scaling in the case of ordinary permutations is logarithmic \cite{basdevant2016hammersley,basdevant2017almost}, even though the proportionality constant is not rigorously known. Obtaining similar results for the signed Hammersley (tree) process would be, we believe, interesting. 

On the other hand, as we noted, our extension to signed permutations is not a direct version of the one in \cite{borodin1999longest}. Studying a variant of our problem consistent with the model in \cite{borodin1999longest} (or studying the longest increasing subsequence in the model we consider) is equally interesting.

\bibliographystyle{psc}
\bibliography{/Users/gistrate/Dropbox/texmf/bibtex/bib/bibtheory}

\end{document}